\newcommand{\vectornorm}[1]{\vert\vert #1\vert \vert}
\def\deff{\stackrel{\triangle}{=}}
\newcommand{\R}{\mathbb{R}}
\newcommand{\D}{\mathcal{D}}
\newcommand{\K}{\mathcal{K}}
\newcommand{\X}{\mathcal{X}}
\newcommand{\xx}{\nabla_{xx}}
\newcommand{\xt}{\nabla_{xt}}
\newcommand{\x}{\nabla_x}
\newcommand{\st}{^{\star}}
\newtheorem{assumption}{Assumption}
\newtheorem{corollary}{Corollary}
\newtheorem{lemma}{Lemma}
\newtheorem{remark}{Remark}
\newtheorem{definition}{Definition}
\newtheorem{theorem}{Theorem}
\newtheorem{proposition}{Proposition}
\newcounter{example}[section]
\newenvironment{example}[1][]{\refstepcounter{example}\par\medskip
   \noindent \textbf{Example~\theexample. #1} \rmfamily}{\medskip}
\begin{document}
\title{Dynamical System Approach for Time-Varying Constrained Convex Optimization Problems}
\author{Rejitha Raveendran, Arun D. Mahindrakar, and Umesh Vaidya
\thanks{Rejitha Raveendran is with the Research and Innovation Group, Tata Consultancy Services, Bengaluru 560066, India (e-mail: raug31@gmail.com). }
\thanks{Arun D. Mahindrakar is with the Department of Electrical Engineering and a member of RBCDSAI, Indian Institute of Technology Madras, Chennai 600036, India (e-mail: arun\_dm@iitm.ac.in).}
\thanks{Umesh Vaidya is with the Department of Mechanical Enginnering, Clemson University, Clemson, SC 29634 USA (e-mail: uvaidya@clemson.edu).}}
%%%%%%%%%%%%%%%%%%%%%%%%%%%%%%%%%%%%%%%%%%%%%%%%%%%%%%%%%%%%%%%%%%%%%%%%%
\maketitle
%%%%%%%%%%%%%%%%%%%%%%%%%%%%%%%%%%%%%%%%%%%%%%%%%%%%%%%%%%%%%%%%%%%%%%%%%
\begin{abstract}
Optimization problems emerging in most of the real-world applications are dynamic, where either the objective function or the constraints change continuously over time.  This paper proposes projected primal-dual dynamical system approaches to track the primal and dual optimizer trajectories of an inequality constrained time-varying (TV) convex optimization problem with a strongly convex objective function. First, we present a dynamical system that asymptotically tracks the optimizer trajectory of an inequality constrained TV optimization problem. Later we modify the proposed dynamics to achieve the convergence to the optimizer trajectory within a fixed time. The asymptotic and fixed-time convergence of the proposed dynamical systems to the optimizer trajectory is shown via Lyapunov based analysis. Finally, we consider the TV extended Fermat -Torricelli problem (eFTP) of minimizing the sum-of-squared distances to a finite number of nonempty, closed and convex TV sets, to illustrate the applicability of the projected dynamical systems proposed in this paper. 
\end{abstract}
%%%%%%%%%%%%%%%%%%%%%%%%%%%%%%%%%%%%%%%%%%%%%%%%%%%%%%%%%%%%%%%%%%%%%%%%%
\begin{IEEEkeywords}
    Lyapunov methods, Optimization algorithms, Stability of nonlinear systems, Time-varying optimization 
\end{IEEEkeywords}
%%%%%%%%%%%%%%%%%%%%%%%%%%%%%%%%%%%%%%%%%%%%%%%%%%%%%%%%%%%%%%%%%%%%%%%%%
\section{Introduction}
    Optimization is a mechanism for selecting the best element from a set of choices that satisfy the stated criteria to utilize the available resources effectively.  Accordingly, the applicability of optimization is widespread among diverse fields, including guidance and navigation of aerial and autonomous vehicles, machine learning, computational system biology, molecular modelling, economics, etc. \cite{Appl:RobtNAvgn,Appl:OptimalPowerFlow, Appl:ZNN, Appl:SgnalPrcsng,Appl:ML}. Most of the literature on the solution approaches for an optimization problem assumes that the problem is static, that is, neither the objective function nor the constraints of the optimization problem change with respect to time.  However, most of the evolving optimization problems in real-time applications are dynamic \cite{TV_Formation_Trnkg, Appl:ResAll, Appl:Wireless}, that is either the objective function or the constraints vary according to some parameters.
    
    In a TV optimization problem, either the objective function or the constraints are time-dependent and continuously vary with time, resulting in an optimizer trajectory formed with the optimal points at each time instant.  Therefore, solving a TV optimization problem eventually becomes a tracking problem, where the optimizer trajectory of the problem has to be tracked. Even though there exist various approaches to solve time-invariant optimization problems, that includes Newton's method, subgradient methods, interior point method \cite{boyd}, and primal-dual dynamics \cite{Cherukuri:asymp_conv}, not much work has been carried out in the area of TV optimization. Batch algorithms, online optimization and prediction-correction algorithms are the commonly used techniques for solving TV convex optimization problems.
    
   In batch algorithms \cite{Simonetto:TimeStructured}, the TV optimization problem is sampled at a particular sampling time, and the sampled optimization problem is solved at each sampling instants. Hence it requires an extensive computation time and guarantees an optimal solution only if each sampled optimization problem converges within a time interval that is smaller than the sampling intervals. Therefore the batch optimization method is not widely accepted in real-time applications.  Online optimization technique \cite{Akbari:Online, DistributedOCO, Marcello:Online,OCO, OCO_Arxiv} is an unstructured algorithm \cite{Simonetto:TimeStructured} used for solving TV optimization problems. It estimates the optimal points at each time instants only with the available past information and without using properties of the optimization problem to be solved. Whereas the time-structured algorithms use the properties of the optimization problem (such as Lipschitzness, differentiability, etc.)  to track the optimizer trajectories; one of the popular time-structured algorithms is the prediction-correction method.  
    
    Prediction-correction algorithms are the frequently used approaches to track the optimizer trajectories of a TV convex optimization problem. It involves a ``prediction" step to predict the optimizer trajectory in the next instant, and a ``correction" step to drive the predicted value to the optimizer trajectory. However, the prediction-correction algorithm assumes strong convexity of the objective function to solve unconstrained \cite{Simonetto:AClassof,Simonetto:unconstrained, SImonetto_Confe} and constrained \cite{Simonetto:Constr1,Simonetto:Constr2,Simonetto:Constr3} TV convex optimization problems. A prediction-correction based continuous time dynamical system approach is proposed in \cite{Zhao} to solve unconstrained TV optimization problems, and guarantees the exponential convergence of the trajectories of the proposed system to the optimizer trajectory.
    Later, Fazlyab et al. extended such dynamical systems in \cite{PredCorr:Mehyar} by employing the associated Lagrangian function to track the optimizer trajectory of an equality constrained TV optimization problem. Inequality constrained TV optimization problems are also solved in \cite{PredCorr:Mehyar} by approximating it into an unconstrained optimization problem using logarithmic barrier functions with appropriately chosen barrier parameter and slack variables. In \cite{projectedPowerSystem}, the authors use projected dynamical system approach to track the optimizer trajectory of a TV nonconvex optimization problem, where the inequality constraints are eliminated by projecting the solution to the feasible region at each instant of time. The dynamical system approach proposed in this work employs a projection operator to guarantee the feasibility of the Lagrange multipliers.

    The main contributions of the work are as follows. We propose projected dynamical system approaches to track the optimizer trajectory of an inequality constrained convex optimization problem with TV objective function and TV inequality constraints, based on the KKT optimality conditions \cite{PredCorr:Mehyar} and the optimizer trajectory characterization derived from the KKT conditions. The proposed dynamical systems differ from each other in terms of their settling time function, the time at which the system trajectories converge to the optimizer trajectory of the TV optimization problem.
    In the first approach, the dynamical system achieves asymptotic convergence to the optimizer trajectory, whereas the second system ensures convergence within a fixed time.
    Unlike the method proposed in \cite{PredCorr:Mehyar} which employs the interior point method, the discontinuous dynamical system approach presented in this work employs the Lagrangian function associated with the optimization problem and a projection operator to ensure the feasibility of the Lagrange multipliers associated with the inequality constraints.  Moreover, the proposed dynamical system allows the Lagrange multiplier to shift from zero to a positive value whenever the corresponding inequality constraint switches from inactive to active mode. The asymptotic and fixed-time convergence of the proposed projected dynamical system to the optimizer trajectory is shown via Lyapunov-based analysis.  Throughout this paper, we assume that the objective function of the optimization problem is twice continuously differentiable and strongly convex, which is a rather typical assumption in most existing works. As an example, we formulate the TV version of the eFTP that solves for a point in the Euclidean space that minimizes the sum-of-distances to a finite number of nonempty, closed and convex TV sets and the corresponding points in each convex set with the assumption that at least one of the convex set is bounded. We approximate the objective function with the sum-of-squared distances to track the optimizer trajectory of TV eFTP using the proposed dynamical system approach. A dynamical system approach to solve the TV eFTP is also a new contribution. 
%%%%%%%%%%%%%%%%%%%%%%%%%%%%%%%%%%%%%%%%%%%%%%%%%%%%%%%%%%%%%%%%%%%%%%%%%
\section{Notations and Math Preliminaries}
    \subsection{Notations}
    Let $\R^n$ denote the $n$-dimensional Euclidean space and $\vectornorm{.}$ be the Euclidean norm in $\R^n$. Let $\R,\R_{\geq0}$ and $\R_{>0}$ be the set of real numbers, non-negative real numbers and positive real numbers respectively and the identity matrix in $\R^n$ is denoted by $I_n$. The gradient of a function $f(x,t):\R^n\times \R_{\geq 0}\rightarrow\R$ with respect to $x$ is denoted by $\x f(x,t)$ and its partial derivative with respect to $t$ is $\xt f(x,t)$. The time-derivative and Hessian of the function $f(x,t)$ is represented by $\nabla_t f(x,t)$ and $\xx f(x,t)$ respectively.  The row-wise multiplication of the matrix $Q\in\R^{m\times n}$ with a vector $p\in\R^m$, denoted by ``$\circ$", is defined as $p\circ Q = \begin{bmatrix}p_1q_1 &p_2q_2&\cdots&p_mq_m\end{bmatrix}$ where $q_1,q_2,\ldots, q_m$ are the rows of the matrix $Q$.  For a given $x\in\R^n$, $x_i$ represents the $i^{\textrm{th}}$ component of the vector $x$ and $B(x,r)$ denotes the open ball centered at $x$ with a radius $r>0$. For a set $A$, its cardinality is denoted by $|A|$ and the relative interior of the set $A$ is denoted with $\textrm{relint}(A)$ and is defined as $\{x\in A:B(x,r)\bigcap\textrm{ aff}(A)\subseteq A \textrm{ for some } r>0\}$, where $\textrm{aff}(A)$ is the set of all affine combinations of the elements in $A$. For a symmetric matrix $A, A\succ 0$ represents positive definiteness and $A\succcurlyeq 0$ for positive semi-definiteness. The notation $[b]_a^+$ denotes the component wise operation\\ $[b]_a^+=\begin{cases}b,\quad& \textrm{ if }a>0\\\max\{0,b\},\quad&\textrm{ if } a=0\end{cases}$ with $a\in\R_{\geq0}$ and $b\in\R$. The projection of a point $x\in\R^n$ onto a closed and convex set $\mathcal{W}\subset \R^n$ is defined as,
        $$\textrm{proj}_{\mathcal{W}}(x) = \arg \min_{y\in \mathcal{W}} \vectornorm{y-x}.$$ 
    The projection of a given vector $v\in\R^n$ at a given point $x\in\mathcal{W}$ with respect to the set $\mathcal{W}$ \cite{nagurney1995projected} is,
        $$\Pi_{\mathcal{W}}(x,v)=\lim_{\delta\rightarrow 0^+} \dfrac{\textrm{proj}_{\mathcal{W}}(x+\delta v)-x}{\delta}.$$
%%%%%%%%%%%%%%%%%%%%%%%%%%%%%%%%%%%%%%%%%%%%%%%%%%%%%%%%%%%%%%%%%%%%%%%%%
    \subsection{Fixed-time convergence}
    Consider a nonlinear TV dynamical system of the form 
    \begin{equation}\label{sys1}
        \dot{x}(t)=h(x,t),\enspace x(t_0)=x_0 \textrm{ and } t_0\geq0
    \end{equation}
    where $h:\D\times R_{\geq 0}\rightarrow\R$ and $\D\subseteq\R^n$ is the domain containing the origin. The function $h(x,t)$ is jointly continuous in both $x$ and $t$.
    Let  $\phi(x_0,t_0,t):\D\times\R_{\geq 0} \times\R_{\geq 0}\rightarrow \D $ be the flow associated with \eqref{sys1}. 
    
    \begin{definition}[\cite{Haddad:FTS_ACC}]
        The zero solution $x(t)\equiv0$ of the nonlinear dynamical system \eqref{sys1} is finite-time stable if there exists an open neighbourhood $\mathcal{N}\subseteq\D$ of the origin and a function $T:\mathcal{N}\backslash\{0\}\times[0,\infty)\rightarrow[0,\infty)$, known as the settling-time function, such that the following conditions are satisfied:
        \begin{enumerate}
            \item \textit{finite-time convergence}:  For every  $x_0\in\mathcal{N}\backslash\{0\}$ and for every $t_0\in[0,\infty)$, the trajectory $\phi(x_0,t_0,t)$ defined on $[t_0,T(x_0,t_0)),~ \phi(x_0,t_0,t)\in\mathcal{N}\backslash\{0\}$ for all $t\in[t_0,T(x_0,t_0))$, and $\lim_{t\rightarrow T(x_0,t_0)}\phi(x_0,t_0,t)=0$.
            \item \textit{Lyapunov Stability}: For every $\epsilon>0$ and $t_0\in[0,\infty)$, there exists $\delta=\delta(\epsilon,t_0)>0$ such that $\mathcal{B}_{\delta}(0)\subset\mathcal{N}$ and for every $x_0\in\mathcal{B}_{\delta}(0)\backslash\{0\}, ~\phi(x_0,t_0,t)\in\mathcal{B}_{\epsilon}(0)$ for all $t\in[t_0,T(x_0,t_0)).$
        \end{enumerate}
        In addition $x(t)\equiv0$ is globally finite-time stable if $\mathcal{N}=\R^n$.
    \end{definition}
    
    The settling time function $T(x_0,t_0)$ depends on the initial condition $x_0$ of the system \eqref{sys1}. Therefore we consider a more stronger notion called fixed-time convergence \cite{TVFixed1,TVfx2}, where the settling-time function is independent of the system initialization.
    \begin{definition}(see \cite{Polyakov:FxTS})
        The zero solution $x(t)\equiv0$ of \eqref{sys1} is said to be fixed-time stable, if it is globally finite-time stable and the settling-time function is uniformly upper bounded, that is, there exists $0<T_{max}<\infty$ such that $T(x_0)\leq T_{max},\forall~ x_0\in\R^n$.
    \end{definition}
    The Lyapunov stability conditions for the fixed-time stability of system \eqref{sys1} can be established as follows:
    \begin{lemma}\cite[Lemma 1]{Polyakov:FxTS}\label{L2}
        If there exists a continuously differentiable positive definite function  $V(x,t):\D\times\R_{\geq 0}\rightarrow\R_{\geq0}$ such that 
        \begin{enumerate}
            \item $V(0,t)=0$ for all $t\geq 0$
            \item any solution $x(t)$ of \eqref{sys1} satisfies the inequality
                \begin{equation}
                    \dot{V}(x,t)\leq -\left(a V^{p}\left(x,t\right)+b V^{q}\left(x,t\right)\right)^{k}
                \end{equation}
                for some $p, q, a, b, k >0, p k<1, q k>1$, 
        \end{enumerate}
        then the origin is globally fixed-time stable for the system \eqref{sys1} and the settling-time function is
        \begin{equation}\label{T_settle_Fxts}
            T(x_0)\leq T_{\max}=\frac{1}{a(1-p)}+\frac{1}{b(q-1)}, \enspace \forall ~x_0\in\D.
        \end{equation}
    \end{lemma}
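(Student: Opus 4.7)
The strategy is to reduce the vector problem to a scalar comparison ODE and then bound its settling time uniformly in the initial value by splitting the trajectory into two phases separated by the level set $\{V=1\}$.

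First, I would note that positive definiteness of $V$ together with condition 2 immediately yields $\dot V \leq 0$, so standard Lyapunov arguments give stability of the equilibrium and $V(x(t),t)$ is non-increasing and absolutely continuous along every solution. Applying the scalar comparison lemma to the differential inequality $\dot V \leq -(aV^p+bV^q)^k$ lets me dominate $V(x(t),t)$ by the solution $v(t)$ of the companion ODE $\dot v = -(av^p+bv^q)^k$ with $v(t_0)=V(x_0,t_0)$. The fixed-time problem for $V$ then reduces to showing that $v$ reaches zero in a time uniformly bounded in $v(t_0)$.

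Next I would split the decay of $v$ into two phases separated by $v=1$. In the phase where $v\geq 1$, dropping the $av^p$ term and using monotonicity of $s\mapsto s^k$ on $\R_{\geq 0}$ gives $\dot v \leq -b^k v^{qk}$; since $qk>1$, separation of variables and integration from $v(t_0)$ down to $1$ produces
\begin{equation*}
    T_1 \;\leq\; \frac{1 - v(t_0)^{1-qk}}{b^k(qk-1)} \;\leq\; \frac{1}{b^k(qk-1)},
\end{equation*}
a bound that is independent of $v(t_0)$ because $1-qk<0$ keeps the numerator bounded as $v(t_0)\to\infty$. In the phase $0<v\leq 1$, a symmetric argument drops the $bv^q$ term to obtain $\dot v \leq -a^k v^{pk}$; since $pk<1$, integrating from $1$ down to $0$ yields a finite $T_2 \leq \frac{1}{a^k(1-pk)}$. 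Summing $T_1+T_2$ and invoking $V\leq v$ along trajectories then delivers the claimed uniform upper bound $T(x_0)\leq T_{\max}$, with the constants emerging in the form stated (up to the absorption of $k$ into $a$ and $b$).

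The main technical obstacle — and the only place where the hypotheses $pk<1$ and $qk>1$ are really consumed — is establishing that the two improper integrals $\int_1^{v(t_0)} s^{-qk}\,ds$ and $\int_0^1 s^{-pk}\,ds$ are finite uniformly in the initialization; the first forces $qk>1$ (so the integral stays bounded as the upper limit tends to infinity) and the second forces $pk<1$ (so the singularity at the origin is integrable). A secondary subtlety is justifying the comparison argument through the instant $v$ reaches zero: this is handled by noting that the right-hand side $-(av^p+bv^q)^k$ is continuous on $[0,\infty)$ and vanishes at $v=0$, so uniqueness forward in time forces $v$, and hence $V$, to remain at zero thereafter, which establishes the global fixed-time stability.
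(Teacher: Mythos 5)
This lemma is quoted verbatim from Polyakov (cited as \cite[Lemma 1]{Polyakov:FxTS}); the paper supplies no proof of its own, so there is nothing internal to compare against. Your two-phase comparison argument is the standard proof of that result and is correct in its essentials: the comparison ODE, the split at the level set $\{v=1\}$, and the observation that $qk>1$ controls the improper integral at infinity while $pk<1$ controls the singularity at the origin are exactly the right ingredients. Two remarks. First, your derivation yields the bound $T\leq \frac{1}{a^{k}(1-pk)}+\frac{1}{b^{k}(qk-1)}$, which is in fact the estimate in Polyakov's original lemma; the bound printed in the statement here, $\frac{1}{a(1-p)}+\frac{1}{b(q-1)}$, coincides with it only when $k=1$, and your parenthetical about ``absorbing $k$ into $a$ and $b$'' does not rigorously bridge the gap for general $k$. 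You should either state the general-$k$ bound or note explicitly that the printed form is the $k=1$ specialization (which is the only case the paper ever invokes, in Proposition 1). Second, your appeal to ``uniqueness forward in time'' for the companion ODE at $v=0$ is shaky, since the right-hand side is continuous but not Lipschitz there when $pk<1$; the cleaner route is to avoid the issue entirely by noting that once the comparison gives $V(x(t_1),t_1)\leq v(t_1)=0$, nonnegativity of $V$ forces $V(x(t_1),t_1)=0$, and $\dot V\leq 0$ together with $V\geq 0$ keeps $V$ at zero thereafter. Neither point undermines the argument; both are easy repairs.
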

%%%%%%%%%%%%%%%%%%%%%%%%%%%%%%%%%%%%%%%%%%%%%%%%%%%%%%%%%%%%%%%%%%%%%%%%%
\section{Problem Statement}
    Consider an inequality constrained convex optimization problem with TV objective function $f:\R^n\times\R_{\geq0}\rightarrow\R$ and TV inequality constraints $g_i:\R^n\times\R_{\geq0}\rightarrow\R$:
    \begin{equation}\label{Ineq}
        \begin{aligned}
            \min_{x(t)}\enspace &f(x,t)\\
            \textrm{s.t.}\enspace & g_i(x,t)\leq 0,\enspace i=1,2,\ldots,m.
        \end{aligned}
    \end{equation}
    We assume that the objective function and the inequality constraints are continuously differentiable with respect to $t$ for all $(x,t)\in\R^n\times\R_{\geq 0}$ and the inequality constraints do not change with time at an unbounded rate, that is, $\vectornorm{\nabla_t g_i(x,t)}\leq \rho_i(t),\;\forall~t\geq 0$ where $0<\rho_i(t)<\infty$ for $i=1,2,\ldots,m$. To propose a dynamical system to track the optimal solution  of problem \eqref{Ineq}, we consider the associated Lagrangian function $L:\R^n\times\R^m\times\R_{\geq0}\rightarrow\R$ defined as,
    \begin{equation}\label{Lagrangian}
        L(x,\lambda,t)= f(x,t)+\sum_{i=1}^m \lambda_i(t)g_i(x,t)
    \end{equation}
    where $\lambda_i\in\R_{\geq0}$ is the Lagrange multiplier associated with the $i^{\textrm{th}}$ inequality constraint. 
    
    Let $x\st$ be the minimizer of the optimization problem \eqref{Ineq} at the time instant $t$ and the corresponding dual optimal argument is denoted by $\lambda\st$. Then the primal-dual optimizer trajectory formed with the optimal points from each instant of time is denoted by $(x\st(t),\lambda\st(t))$.  To provide a better clarity to the contributions of the work, we impose the following assumptions on the TV optimization problem \eqref{Ineq} throughout the paper.
    \begin{assumption}\label{Assumption_StrongConvexity}
        The objective function $f(x,t)$ is twice continuously differentiable and $\mu$-strongly convex in $x\in\R^n$ for all $t\geq0$, that is, $\xx f(x,t)\succcurlyeq \mu I_n$ for some $\mu>0$ and the inequality constraints $g_i(x,t),\; i=1,2,\ldots,m$ are convex in $x\in\R^n$ for all $t\geq 0$.
    \end{assumption}
    
    \begin{assumption}(Slater's constraint qualification)\label{Assumption_SlatersCondition}
        Let the set of feasible region at time $t$ be  $\Theta(t)=\{x(t):g_i(x,t)\leq 0\} , \enspace i=1,2,\ldots,m$ for all $t\geq 0$, then there exist an $\hat{x}(t)\in\textrm{relint}(\Theta(t))$  for all $t\geq 0$. In other words, for all $t\geq0$, there exist a point $\Hat{x}(t)$ in the relative interior of the feasible region at which all the non-affine, convex inequality constraints are strictly feasible. 
    \end{assumption}

    \begin{assumption}\cite[Assumption $S^+$]{bertsekas2014constrained} \label{Assumption_StrictComplementary}
        The dual optimizer $\lambda\st(t)$ satisfies strict complementary slackness condition at each time $t\geq0$, that is, $g_i(x\st(t),t)=0 \implies \lambda_i\st(t)>0$ for all $i=1,2,\ldots,m.$ 
    \end{assumption}
    
    \begin{assumption}\label{Assumption_linearindependence}
     Let $I(x,t)$ denotes the index set of all active constraints at the time instant $t$, that is, $I(x,t)\deff\{i\in\{1,2,\ldots,m\}:g_i(x,t)=0\}$. The cardinality of $I(x,t)$ at time $t$, denoted by $\vert I(x,t) \vert$, is less than or equal to the dimension of the primal variables at each time $t\geq 0$. Moreover, the vectors in the set $\{\x g_i(x,t):i\in I(x,t)\}$ are linearly independent for almost all $t\geq0$.
    \end{assumption}
    \begin{assumption}\label{inactiveGi}
       For every $t\geq0$, there exists at least one inequality constraint $g_i(x,t)$ for $i\in\{1,2,\ldots,m\}$  such that its gradient $\x g_i(x,t)$ is not orthogonal to $ \x f(x,t) $.
    \end{assumption}
   \begin{assumption}\label{Assumption_boundedness}
        The Hessian of the Lagrangian function is uniformly bounded for all $t\geq 0$, that is, there exists a constant $b$ with $0<b<\infty $  such that
        \begin{equation*}
            \sup_{x(t)\in\R^n}\vectornorm{\xx L(x,\lambda,t)}\le b,\enspace\forall~t\geq 0.
        \end{equation*}
        Thus $L(x,\lambda,t)$ satisfies the $\mathcal{L}_p$-Lipschitzian gradient condition
        \begin{equation*}
            \vectornorm{\x L(x_1,\lambda,t)-\x L(x_2,\lambda,t)}\leq \mathcal{L}_p\vectornorm{x_1-x_2}
        \end{equation*}
        for all $x_1,x_2\in\R^n$ and  for all $t\geq0$, where
        $\x L(x,\lambda,t)=\x f(x,t)+\sum_{i=1}^m\lambda_i(t) \x g_i(x,t).$
    \end{assumption}
    
    The following Lemma ensures the strong convexity of the Lagrangian function in $x\in\R^n$ and the proof of the Lemma is provided in Appendix \ref{Proof_Lagrangian_StrongConvexity}.
    \begin{lemma}\label{Lagragian_strongconvex}
        Under the Assumption-\ref{Assumption_StrongConvexity}, the Lagrangian function \eqref{Lagrangian} is strongly convex in $x$ for all $t\geq 0$.
    \end{lemma}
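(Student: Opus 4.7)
The plan is to argue directly at the level of the Hessian of $L$ with respect to $x$, using the fact that a twice continuously differentiable function is $\mu$-strongly convex in $x$ if and only if $\nabla_{xx}$ of that function is lower bounded by $\mu I_n$ in the positive semi-definite ordering. Since $f$ is $C^2$ by Assumption \ref{Assumption_StrongConvexity} and each $g_i$ is convex, I will first note that each $g_i$ may be assumed twice continuously differentiable (or handled via the subdifferential inequality for convex functions, which suffices for strong convexity of the sum), so that $\nabla_{xx} L(x,\lambda,t)$ makes sense and equals $\nabla_{xx} f(x,t) + \sum_{i=1}^m \lambda_i(t)\, \nabla_{xx} g_i(x,t)$.

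Next I would combine three facts, each coming straight from the hypotheses or from the nature of the problem: (i) by Assumption \ref{Assumption_StrongConvexity}, $\nabla_{xx} f(x,t) \succcurlyeq \mu I_n$; (ii) again by Assumption \ref{Assumption_StrongConvexity}, convexity of each $g_i$ in $x$ gives $\nabla_{xx} g_i(x,t) \succcurlyeq 0$; and (iii) because the $\lambda_i$ are Lagrange multipliers associated with inequality constraints, they satisfy $\lambda_i(t) \geq 0$ for all $t \geq 0$. Multiplying a positive semi-definite matrix by a non-negative scalar preserves positive semi-definiteness, so $\lambda_i(t)\nabla_{xx} g_i(x,t) \succcurlyeq 0$ for each $i$, and the sum of positive semi-definite matrices is positive semi-definite.

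Adding these contributions yields
\begin{equation*}
\nabla_{xx} L(x,\lambda,t) \;=\; \nabla_{xx} f(x,t) + \sum_{i=1}^{m} \lambda_i(t)\, \nabla_{xx} g_i(x,t) \;\succcurlyeq\; \mu I_n,
\end{equation*}
for all $x \in \R^n$ and all $t \geq 0$. Invoking the standard characterization of strong convexity via the Hessian then gives that $L(\cdot,\lambda,t)$ is $\mu$-strongly convex in $x$ for every fixed $\lambda \in \R^m_{\geq 0}$ and every $t \geq 0$, which is the claim.

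There is essentially no serious obstacle here: the result is a direct consequence of the definitions, and the only subtlety is being explicit that the $\lambda_i$ are non-negative (which is built into the dual feasibility requirement and already used in the definition of $L$ in \eqref{Lagrangian}) and that the second-order characterization applies. If the author prefers to avoid assuming $C^2$ regularity of the $g_i$, one can instead use the first-order definition of strong convexity: write $L(y,\lambda,t) - L(x,\lambda,t) - \langle \nabla_x L(x,\lambda,t), y-x\rangle$ as the sum of $f(y,t)-f(x,t)-\langle \nabla_x f(x,t),y-x\rangle \geq \tfrac{\mu}{2}\|y-x\|^2$ and $\sum_i \lambda_i(t)\bigl(g_i(y,t)-g_i(x,t)-\langle \nabla_x g_i(x,t),y-x\rangle\bigr) \geq 0$, which yields the same conclusion with the same $\mu$.
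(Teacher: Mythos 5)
Your proposal is correct, but your primary argument differs from the paper's. The paper proves the lemma via the first-order characterization of strong convexity: it expands $L(x_2,\lambda,t)-L(x_1,\lambda,t)$, applies the strong-convexity inequality $f(x_2,t)-f(x_1,t)\geq \nabla_x f(x_1,t)^\top(x_2-x_1)+\tfrac{\mu}{2}\vectornorm{x_2-x_1}^2$ to the objective, the ordinary first-order convexity inequality to each $g_i$, and the nonnegativity of the multipliers to drop the constraint terms --- exactly the fallback argument you sketch in your final sentences. Your main route instead works at the level of Hessians, concluding $\nabla_{xx}L(x,\lambda,t)\succcurlyeq \mu I_n$ from $\nabla_{xx}f\succcurlyeq\mu I_n$, $\nabla_{xx}g_i\succcurlyeq 0$, and $\lambda_i\geq 0$. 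Both are valid and yield the same modulus $\mu$; the Hessian route is marginally slicker but requires the $g_i$ to be twice differentiable, which Assumption~\ref{Assumption_StrongConvexity} does not explicitly grant (it only asserts convexity of the $g_i$), whereas the paper's first-order argument needs only differentiability of the $g_i$. You correctly identify this subtlety and supply the first-order version as a remedy, so there is no gap --- just note that the weaker-hypothesis argument is the one the paper actually commits to, and arguably should be your primary proof rather than the footnote.
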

    The Assumptions \ref{Assumption_StrongConvexity} and \ref{Assumption_SlatersCondition} together ensure the  strong duality \cite{duality1} of the optimization problem \eqref{Ineq} for all $t\geq0$, and thereby the optimizer trajectory pair
    $(x\st(t),\lambda\st(t))$ must satisfy the following Karush-Kuhn-Tucker (KKT) optimality conditions  \cite{PredCorr:Mehyar} for all $t\geq0$: 
    \begin{align}
        \x f(x\st(t),t)+\sum_{i=1}^m\lambda_i\st(t)\x g_i(x\st(t),t)=0\label{KKT1}\\
        \lambda_i\st(t)g_i(x\st(t),t)=0,\;i=1,2,\ldots,m\label{KKT2}\\
        \lambda_i\st(t)\geq0,\;i=1,2,\ldots,m\label{KKT3}\\
        g_i(x\st(t),t)\leq0,\;i=1,2,\ldots,m\label{KKT4}.
    \end{align}
    The existence and uniqueness of the primal optimizer trajectory $x\st(t)$ of the optimization problem \eqref{Ineq} is guaranteed by the Assumption \ref{Assumption_StrongConvexity}. Whereas under the Assumption \ref{Assumption_linearindependence}, the KKT condition \eqref{KKT1} implies the uniqueness of the dual optimizer trajectory $\lambda\st(t)$. Since \eqref{KKT1} and \eqref{KKT2} hold for all $t\geq 0$, the time-derivative of \eqref{KKT1} and \eqref{KKT2} must also be zero and  this leads to
    \begin{align}
        &\hspace{1.8cm}\xx f(x\st(t),t)\dot{x}\st(t)+\xt f(x\st(t),t)\nonumber\\
        &+\sum_{i=1}^m\left[\dot{\lambda}\st(t)\x g_i(x\st(t),t)
        +\lambda_i\st(t)\xt g_i(x\st(t),t)\right.\nonumber\\
        &\hspace{1.8cm}+\lambda_i\st(t)\xx g_i(x\st(t),t)\dot{x}\st(t)\Big]=0\label{optimality1}\\
        &\lambda_i\st(t)\x g_i(x\st(t),t)\dot{x}\st (t)+\lambda_i\st(t)\nabla_t g_i(x\st(t),t)\nonumber\\
        &\hspace{1.8cm}+\dot{\lambda}_i\st(t)g_i(x\st(t),t)=0,\quad i=1,2,\ldots,m.\label{optimality2}
    \end{align}
    With
    \begin{align*}
        G_d &\deff \textrm{diag}\{g_1, g_2, \ldots, g_m\} \in\R^{m\times m}\\
        \nabla_t G &\deff 
            \begin{bmatrix}
                \nabla_t g_1 & \nabla_t g_2 & \ldots & \nabla_t g_m
            \end{bmatrix}^\top\in \R^m\\
        \x G&\deff 
            \begin{bmatrix}
                \x g_1 & \x g_2 & \cdots & \x g_m
            \end{bmatrix}\in R^{n\times m}\\
        \lambda(t)\circ \x G^\top &\deff
            \begin{bmatrix}
                \lambda_1\x g_1^\top & \lambda_2\x g_2^\top & \ldots &\lambda_m\x g_m^\top
            \end{bmatrix}^\top
    \end{align*}
    equations \eqref{optimality1} and \eqref{optimality2} can be together rewritten as
    \begin{align}\label{optimality3}
        J(x\st,\lambda\st,t)        
        \begin{bmatrix}
            \dot{x}\st(t)\\\dot{\lambda}\st(t)
        \end{bmatrix}+
        H(x\st,\lambda\st,t)=0,
    \end{align}
    where 
    \begin{align*}
        J(x\st,\lambda\st,t)&= 
        \begin{bmatrix}
            \xx L(x\st,\lambda\st,t) & \x G(x\st,t)\\
            \lambda\st(t)\circ\x G^\top(x\st,t) & G_d(x\st,t)
        \end{bmatrix}\\
        H(x\st,\lambda\st,t) &=
        \begin{bmatrix}
            \xt f(x\st,t)+\sum_{i=1}^m\lambda_i\st(t)\xt g_i(x\st,t)\\
            \lambda\st(t)\circ\nabla_t G(x\st,t)
        \end{bmatrix}\\
        \xx L(x,\lambda,t) &= \xx f(x,t)+\sum_{i=1}^m\lambda_i(t)\xx g_i(x,t).
    \end{align*}
    Due to space constraints we use $(x\st,\lambda\st)$ instead of $(x\st(t),\lambda\st(t))$ for evaluating the components of the $J(x\st(t),\lambda\st(t),t)$ and $H(x\st(t),\lambda\st(t),t)$ matrices.
    
    Since $\xx L$ is invertible under the Assumption \ref{Assumption_StrongConvexity}, the Schur complement of the block  $\xx L$ of the matrix $J(x\st,\lambda\st,t)$ is $$M(x\st(t),\lambda\st(t),t)=G_d-\lambda\st(t) \circ \underbrace{\x G^\top\xx L^{-1}\x G}_{P(x\st(t),\lambda\st(t),t)},$$ where $P(x\st(t),\lambda\st(t),t)$ is a positive definite matrix under Assumption \ref{Assumption_linearindependence}. The positive definiteness of $P(x\st(t),\lambda\st(t),t)$ together with the Assumption \ref{Assumption_StrictComplementary}, ensures that the matrix $M(x\st(t),\lambda\st(t),t)$ is negative definite for all $t\geq0$. 
    
   \begin{lemma}\label{Lemma_Invertibility}
        Under the Assumptions \ref{Assumption_StrongConvexity}, \ref{Assumption_StrictComplementary} and \ref{Assumption_linearindependence} the matrix $J(x\st(t),\lambda\st(t),t)$ is invertible at all $t\geq 0$.
    \end{lemma}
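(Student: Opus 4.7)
The plan is to prove invertibility of the $(n{+}m)\times(n{+}m)$ block matrix $J(x\st,\lambda\st,t)$ via the Schur complement identity. Writing $J$ as $\begin{bmatrix} A & B \\ C & D \end{bmatrix}$ with $A=\xx L$, $B=\x G$, $C=\lambda\st\circ\x G^\top$ and $D=G_d$, the standard block-LDU factorization gives $\det(J)=\det(A)\det(D-CA^{-1}B)=\det(\xx L)\det(M)$, so it suffices to check that both factors are nonzero.

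First I would dispose of $\det(\xx L)\neq 0$. Under Assumption~\ref{Assumption_StrongConvexity}, Lemma~\ref{Lagragian_strongconvex} gives strong convexity of $L(\cdot,\lambda,t)$, hence $\xx L\succ 0$ at the optimizer pair; in particular $\xx L$ is invertible (and the Schur complement $M$ is well defined, as already noted in the discussion preceding the lemma).

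The main step is $\det(M)\neq 0$. I would exploit the structure imposed by complementary slackness. Partition $\{1,\dots,m\}$ into the active index set $I_a=\{i:g_i(x\st,t)=0\}$ and the inactive set $I_b=\{i:g_i(x\st,t)<0\}$. By KKT~\eqref{KKT2} together with strict complementarity (Assumption~\ref{Assumption_StrictComplementary}), $\lambda_i\st=0$ for $i\in I_b$ and $\lambda_i\st>0$ for $i\in I_a$. After the corresponding row/column reordering, the matrix $M=G_d-\lambda\st\circ P$ becomes block lower triangular:
\begin{equation*}
M \;\sim\; \begin{bmatrix} \mathrm{diag}(g_i)_{i\in I_b} & 0 \\ -\mathrm{diag}(\lambda_i\st)_{i\in I_a}\,P_{ab} & -\mathrm{diag}(\lambda_i\st)_{i\in I_a}\,P_{aa} \end{bmatrix},
\end{equation*}
where $P_{aa}$ is the principal submatrix of $P=\x G^\top\xx L^{-1}\x G$ indexed by $I_a$. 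The upper-left block is diagonal with strictly negative entries ($g_i<0$ on $I_b$), so it is invertible. For the lower-right block, $\mathrm{diag}(\lambda_i\st)_{i\in I_a}$ is invertible (all entries strictly positive), while Assumption~\ref{Assumption_linearindependence} ensures that $\{\x g_i(x\st,t):i\in I_a\}$ are linearly independent, hence the matrix $[\x g_i]_{i\in I_a}$ has full column rank; since $\xx L^{-1}\succ 0$, this makes $P_{aa}=[\x g_i]_{i\in I_a}^\top\xx L^{-1}[\x g_i]_{i\in I_a}$ positive definite and therefore invertible. Multiplying the two block determinants then gives $\det(M)\neq 0$.

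Combining the two nonvanishing factors yields $\det(J(x\st(t),\lambda\st(t),t))\neq 0$ for all $t\geq 0$. The only delicate point is the step $\det(M)\neq 0$: one must be careful that although the paper calls $M$ ``negative definite'', $M$ is not symmetric (the factor $\mathrm{diag}(\lambda\st)$ destroys symmetry), so the argument is best phrased through the block-triangular determinant rather than a quadratic-form inequality. Once this is done correctly, no further machinery is required.
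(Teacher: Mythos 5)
Your proposal is correct and rests on the same pillar as the paper's proof: invertibility of $J$ reduces, via the block factorization $\det(J)=\det(\xx L)\det(M)$, to nonsingularity of the Schur complement $M=G_d-\lambda\st\circ\x G^\top\xx L^{-1}\x G$, with $\xx L\succ0$ supplied by Lemma~\ref{Lagragian_strongconvex}. Where you differ is in how $\det(M)\neq0$ is established. The paper splits into three cases (all constraints inactive, all multipliers positive, and the mixed case) and, for the mixed case, simply asserts that nonsingularity ``can be verified from the structure of $M$'' in \eqref{MMatrix}. Your single partition into active and inactive index sets, followed by the observation that after reordering $M$ is block lower triangular with an invertible diagonal block $\mathrm{diag}(g_i)_{i\in I_b}$ and an invertible block $-\mathrm{diag}(\lambda_i\st)_{i\in I_a}P_{aa}$ (the latter because strict complementarity gives $\lambda_i\st>0$ on $I_a$ and Assumption~\ref{Assumption_linearindependence} makes $P_{aa}\succ0$), subsumes all three of the paper's cases and actually carries out the verification the paper leaves implicit. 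Your caveat is also well taken: $M$ is not symmetric because of the row-scaling by $\lambda\st$, so the paper's statement that $M$ is ``negative definite'' should be read loosely (e.g., as $\det(M)\neq0$, or as negative definiteness of a symmetrized/congruent form), and the determinant route you take is the cleaner way to phrase the conclusion. In short: same decomposition, but your argument for the key step is more complete than the one printed in the paper.
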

    
    The proof of Lemma \ref{Lemma_Invertibility} is provided in the Appendix \ref{Proof_Lemma_Invertibility}. \\Solving for $x\st(t)$ and $\lambda\st(t)$ from \eqref{optimality3} yields the following dynamical system:
    \begin{equation}\label{OptimalityCondition}
        \begin{bmatrix}
            \dot{x}\st(t)\\\dot{\lambda}\st(t)
        \end{bmatrix} = -J^{-1}(x\st(t),\lambda\st(t),t)H(x\st(t),\lambda\st(t),t) 
    \end{equation}
    that characterizes the primal and dual optimizer trajectory $(x\st(t),\lambda\st(t))$ of the problem \eqref{Ineq}.
    The following Lemma ensures that the gradient of the Lagrangian function with respect to $x$  vanishes only along the unique primal-dual optimizer trajectory of problem \eqref{Ineq}.
    \begin{lemma}\label{Lemma_StrongConvexity}
        Let $\mathcal{Z}(t)=\{(x(t),\lambda(t)): \x L(x,\lambda,t)=0\}$. Then $(x(t),\lambda(t))\in \mathcal{Z}(t) $ if and only if $(x(t),\lambda(t))=(x\st(t),\lambda\st(t))$ for all $t\geq 0$. Moreover, under the Assumptions \ref{Assumption_StrongConvexity}, \ref{Assumption_SlatersCondition} and \ref{Assumption_linearindependence}, the optimizer trajectory $(x\st(t),\lambda\st(t))$ of the TV optimization problem is unique for all $t\geq 0$.
    \end{lemma}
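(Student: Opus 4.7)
The claim has an ``if'' direction, an ``only if'' direction, and a uniqueness statement; my plan is to tackle them in that order. The ``if'' direction is immediate: by the KKT optimality condition \eqref{KKT1}, the pair $(x\st(t),\lambda\st(t))$ satisfies $\x L(x\st,\lambda\st,t)=0$, so $(x\st(t),\lambda\st(t))\in\mathcal{Z}(t)$.

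For the ``only if'' direction, I would exploit the strong convexity of $L$ in $x$ established in Lemma \ref{Lagragian_strongconvex}. For any fixed dual variable $\lambda\geq 0$, strong convexity implies that $L(\cdot,\lambda,t)$ has a unique global minimizer $\bar{x}(\lambda,t)$, and the stationarity equation $\x L(x,\lambda,t)=0$ is equivalent to $x=\bar{x}(\lambda,t)$. Thus $\mathcal{Z}(t)$ is parameterized as the graph of this map. I would then argue, using Slater's condition (Assumption \ref{Assumption_SlatersCondition}) which yields strong duality, that the Lagrangian admits a unique saddle point, so the only admissible dual value for which $(\bar{x}(\lambda,t),\lambda)$ is KKT-consistent is $\lambda\st(t)$; this forces $\bar{x}(\lambda\st(t),t)=x\st(t)$. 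Existence of this saddle follows from Slater, and its primal component being unique follows from $\mu$-strong convexity of $f$.

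For the uniqueness of $\lambda\st(t)$, I would assume two dual optimizers $\lambda\st^{(1)}(t)$ and $\lambda\st^{(2)}(t)$ exist. Both satisfy \eqref{KKT1} at $x\st(t)$, so
\begin{equation*}
\sum_{i=1}^m \bigl(\lambda_i\st^{(1)}(t)-\lambda_i\st^{(2)}(t)\bigr)\,\x g_i(x\st,t)=0.
\end{equation*}
By complementary slackness \eqref{KKT2}, the multipliers for inactive constraints vanish, so the sum collapses to indices $i\in I(x\st,t)$. Assumption \ref{Assumption_linearindependence} then guarantees that $\{\x g_i(x\st,t):i\in I(x\st,t)\}$ is linearly independent, which forces $\lambda\st^{(1)}(t)=\lambda\st^{(2)}(t)$.

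The main obstacle I expect is the ``only if'' step, because $\mathcal{Z}(t)$ as literally written imposes only $x$-stationarity of the Lagrangian, so in principle it contains a whole curve indexed by $\lambda$. Collapsing this family to the single optimizer pair requires invoking primal/dual feasibility and complementary slackness in conjunction with strong convexity and Slater. I would make these implicit admissibility hypotheses explicit at the start of the argument, since the projected dynamics considered in the sequel maintains $\lambda\geq 0$ and $g_i(x,t)\le 0$ on its invariant set, and it is in that context that the equivalence with the optimizer pair becomes sharp.
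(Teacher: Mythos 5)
Your proposal is correct in substance and rests on the same three pillars as the paper's own proof---strong convexity of $L(\cdot,\lambda,t)$ from Lemma \ref{Lagragian_strongconvex}, strong duality via Slater's condition, and Assumption \ref{Assumption_linearindependence} for uniqueness of the multiplier---but the mechanics differ. The paper argues by contradiction through the dual function: it supposes a \emph{feasible}, non-optimal pair $(\tilde{x},\tilde{\lambda})$ with $\x L(\tilde{x},\tilde{\lambda},t)=0$, uses strong convexity to get $Q(\tilde{\lambda},t)=L(\tilde{x},\tilde{\lambda},t)$, invokes $Q(\tilde{\lambda},t)\le Q(\lambda\st,t)=L(x\st,\lambda\st,t)$, and then asserts that this inequality forces $(\tilde{x},\tilde{\lambda})=(x\st,\lambda\st)$. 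You instead describe $\mathcal{Z}(t)$ directly as the graph of the unique-minimizer map $\lambda\mapsto\bar{x}(\lambda,t)$ and collapse it to the saddle point via KKT sufficiency, and you give the standard explicit dual-uniqueness argument (the difference of two multiplier vectors annihilates the active-constraint gradients, which are linearly independent), where the paper merely asserts uniqueness from \eqref{KKT1} and Assumption \ref{Assumption_linearindependence}. Your closing caveat is the most valuable part and is something the paper glosses over: as literally defined, $\mathcal{Z}(t)$ is an $m$-parameter family, so the ``only if'' direction fails without additionally imposing feasibility and complementary slackness; the paper quietly inserts the word ``feasible'' into its hypothesis, and its final step---deducing equality of the two pairs from the one-sided inequality $L(\tilde{x},\tilde{\lambda},t)\le L(x\st,\lambda\st,t)$---is not a valid deduction on its own. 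Making those admissibility hypotheses explicit, as you propose, is an improvement rather than a detour, though it does mean the lemma you prove is slightly weaker than the one stated (and used in the Lyapunov arguments), where $V=0$ is claimed to pin down the optimizer trajectory without any feasibility qualification.
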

    The proof of Lemma \ref{Lemma_StrongConvexity} is given in Appendix \ref{Proof_Lemma_StrongConvexity}.
 
    In the next section we propose a projected dynamical system to track the optimizer trajectory of the TV inequality constrained optimization problem \eqref{Ineq} in a way that, the proposed system reduces to the optimizer trajectory characterization \eqref{optimality3} along the optimizer trajectory $(x\st(t),\lambda\st(t))$. 
%%%%%%%%%%%%%%%%%%%%%%%%%%%%%%%%%%%%%%%%%%%%%%%%%%%%%%%%%%%%%%%%%%%%%%%%%
\section{Asymptotic Convergence of Inequality Constrained TV convex optimization problems}   \label{section4}

    In this section, we follow the steps below to analyze the convergence of the trajectories of a proposed dynamical system to the optimizer trajectory $\left(x\st(t),\lambda\st(t)\right) $ of the TV inequality convex optimization problem \eqref{Ineq}:
\begin{itemize}
    \item Construct a dynamical system to track the optimal solution of problem \eqref{Ineq}
    \item  Formulate the proposed dynamical system as a projected dynamical system and establish the existence of its solution
    \item Demonstrate the convergence of the trajectories of the proposed projected dynamical system using a Lyapunov-based stability analysis.
\end{itemize}
    
    To propose a dynamical system to track the optimal solution of the problem \eqref{Ineq}, we revise the optimizer trajectory characterization \eqref{optimality3} as follows:
    
    \begin{equation}\label{JSystem}
        J(x,\lambda,t)
        \begin{bmatrix}
            \dot{x}(t)\\\dot{\lambda}(t)
        \end{bmatrix}
        =H_{pred}+H_{corr}+\tilde{J}H_{aug},
    \end{equation}
where 
\begin{align*}
    J&=
    \begin{bmatrix}
        \xx L(x,\lambda,t) & \x G(x,t)\\ \lambda(t)\circ\x G^\top(x,t) & G_d(x,t)
    \end{bmatrix}\\
    H_{pred}&=
    \begin{bmatrix}
        -\xt f(x,t)-\xt G(x,t)\lambda(t)\\
        -\lambda\circ \nabla_t G(x,t)
    \end{bmatrix}\\
    H_{corr}&=
    \begin{bmatrix}
        -\alpha\x L(x,\lambda,t)\\
        G_d(x,t)\lambda(t)
    \end{bmatrix},\quad \alpha>0\\
    H_{aug}&=
    \begin{bmatrix}
        -\xx L^{-1}\x G\x G^\top\x L\\
        \x G^\top\x L
    \end{bmatrix}.
\end{align*}
    The motivation of introducing the vectors in the right-hand side of \eqref{JSystem}
    and the matrix $\tilde{J}$ will be discussed later in this section.
    We first analyse the invertibility of the matrix $J(x,\lambda,t)$. The matrix is invertible at each time $t$, if and only if the Schur complement of the invertible block $\xx L(x,\lambda,t)$  of $J(x,\lambda,t)$,
    \begin{equation}\label{Schur}
        M(x,\lambda,t)=G_d-\lambda\circ\x G^\top\xx L^{-1}\x G
    \end{equation}
    is invertible.
    The matrix $M$ becomes singular if there exists an $i\in\{1,2,\ldots,m\}$ for which either $\lambda_i(t)=g_i(x,t)=0$ or $g_i=\lambda_i\x g_i^\top\xx L^{-1}\x g_i$ with zero off-diagonal entries at some time $t_1>0$.
    Therefore to ensure the invertibility of $M$ at all time $t\geq 0$, we approximate the Schur matrix denoted by $\Tilde{M}$ as,
    \begin{equation}
        \Tilde{M}(x,\lambda,t)=M(x,\lambda,t)-S(t)
    \end{equation}
    where $S(t)$ is a diagonal matrix, called the slack matrix with $i^{\textrm{th}}$ diagonal entry $s_i(t)$. 
    Each slack variable $s_i(t)$ is chosen  such that $s_i(t)>0$ for all $t\geq0$ and $s_i(t)\rightarrow0$ as $t\rightarrow\infty$, as a result  $\tilde{M}(x,\lambda,t)\rightarrow M(x,\lambda,t)$ as $t\rightarrow\infty$.
    The approximated inverse $\tilde{J}^{-1}(x,\lambda,t)$ is calculated by replacing the Schur matrix $M$ with $\Tilde{M}$ in the computation of the inverse of $J(x,\lambda,t)$ and then the proposed system \eqref{JSystem} can be rewritten as,
    \begin{equation}\label{JSystemApprox}
        \begin{bmatrix}
            \dot{x}(t)\\\dot{\lambda}(t)
        \end{bmatrix}=\tilde{J}^{-1}\left(H_{pred}+H_{corr}\right)+H_{aug}.
    \end{equation}
    We define a vector field $\X:\R^n\times\R^m\times\R_{\geq0}\rightarrow\R^n\times\R^m$ to denote the right-hand side of \eqref{JSystemApprox} as
    \begin{equation*}
            \X(x,\lambda,t) = \begin{bmatrix} \X_x(x,\lambda,t)\\\X_{\lambda}(x,\lambda,t)\end{bmatrix}
        \end{equation*}
    where $\mathcal{X}_x:\R^n\times\R^m\times\R_{\geq0}\rightarrow\R^n$ and $\mathcal{X}_{\lambda}:\R^n\times\R^m\times\R_{\geq0}\rightarrow\R^m$ are given in \eqref{X_Dynamics} and \eqref{Lambda_Dynamics}.
    Under the assumption of strong duality, the KKT conditions \eqref{KKT1} to \eqref{KKT4} become the necessary and sufficient conditions for optimality. Therefore to ensure the dual feasibility condition \eqref{KKT3}, we incorporate a component-wise projection in the $\lambda(t)$ dynamics  and propose a continuous-time dynamical system
    \begin{equation}\label{Dyn}
        \begin{aligned}
            \dot{x}(t) &= \mathcal{X}_{x}(x,\lambda,t),\quad &x(0)=x_0\\
            \dot{\lambda}(t) &= \left[\mathcal{X}_{\lambda} (x,\lambda,t)\right]_\lambda^+,\quad&\lambda(0)=\lambda_0.
        \end{aligned}
    \end{equation}
    
     \begin{figure*}
        \begin{align}
        \X_x =& -\xx L^{-1}(x,\lambda,t)\Bigg[\alpha\x L(x,\lambda,t)+\xt f(x,t)+\xt G(x,t)\lambda(t)+\x G(x,t)\x G^\top(x,t)\x L(x,\lambda,t)\nonumber\\
        &+\x G(x,t)\Tilde{M}^{-1}(x,\lambda,t)\left(\lambda(t)\circ\x G^\top(x,t)\right)\xx L^{-1}(x,\lambda,t)\left(\alpha\x L(x,\lambda,t)+\xt f(x,t)+\xt G(x,t)\lambda(t)\right)\nonumber\\
        &-\x G(x,t)\Tilde{M}^{-1}(x,\lambda,t)\Big(\lambda\circ 
        \nabla_t G(x,t)-G_d(x,t)\lambda(t)\Big)\Bigg]\label{X_Dynamics}\\
        \X_{\lambda}=& \Tilde{M}^{-1}(x,\lambda,t)\lambda(t)\circ\left(\x G^\top(x,t)\xx L^{-1}(x,\lambda,t)\left(\alpha\x L(x,\lambda,t)+\xt f(x,t)+\xt G(x,t)\lambda(t)\right)-\nabla_t G(x,t)\right)\nonumber\\
        &+\Tilde{M}^{-1}(x,\lambda,t)G_d(x,t)\lambda(t)+\x G^\top(x,t)\x L(x,\lambda,t)\label{Lambda_Dynamics}.
    \end{align}
    \end{figure*}  

    The proposed dynamical system mainly comprises of three parts. Besides the prediction part $H_{pred}$ (derived from the optimizer trajectory characterization \eqref{OptimalityCondition}) and correction parts $H_{corr}$ (arising from the KKT optimality conditions \eqref{KKT1} and \eqref{KKT2}), it consists of an additional term $H_{aug}(x,\lambda,t)$. 
    
     In the absence of this augmented term in \eqref{Lambda_Dynamics}, if there exists a $\Bar{t}\geq 0$ such that $\lambda(\Bar{t})=0$, then $\X_{\lambda}=0$ for all $t\geq\Bar{t}$ and it forces each trajectory $\lambda_i(t)$ to stay at zero for all $t\geq \Bar{t}$. This impedes the convergence of the trajectories of the dynamical system \eqref{Dyn} to the optimizer trajectory $(x\st(t),\lambda\st(t))$ unless $\lambda(t)=\lambda\st(t)=0$ for all $t\geq \Bar{t}$. Thus the utility of the augmented term is to preclude the possibility of the convergence of the trajectories of \eqref{Dyn} to a non-optimizer trajectory. The following Lemma \ref{lemmaAdditionalTerm} ensures that the augmented term vanishes only along the optimizer trajectory.
    
    \begin{lemma}\label{lemmaAdditionalTerm}
        Along the trajectories of the dynamical system \eqref{Dyn}, the augmented term $ \x G^\top(x,t)\x L(x,\lambda,t)\equiv0$ if and only if $(x(t),\lambda(t))=(x\st(t),\lambda\st(t))$ for all $t\geq 0$.
    \end{lemma}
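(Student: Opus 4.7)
I would split into the two implications: the ``only if'' is immediate while the ``if'' direction requires a dynamics-based argument.

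\emph{Only if.} At the primal-dual optimizer the KKT stationarity condition \eqref{KKT1} gives $\x L(x\st,\lambda\st,t)=0$ for every $t\geq0$, so $\x G^\top(x\st,t)\,\x L(x\st,\lambda\st,t)\equiv 0$ trivially.

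\emph{If.} Assume that, along a trajectory $(x(t),\lambda(t))$ of \eqref{Dyn}, the augmented term $\x G^\top\x L$ vanishes for all $t\geq 0$. Both block entries of $H_{aug}$ carry $\x G^\top\x L$ as a factor, so $H_{aug}\equiv 0$ along the trajectory and \eqref{JSystem} collapses to $J[\dot x;\dot\lambda]=H_{pred}+H_{corr}$. Reading off the first block-row gives
\[
\xx L(x,\lambda,t)\,\dot x + \x G(x,t)\,\dot\lambda \;=\; -\xt f(x,t)-\xt G(x,t)\lambda - \alpha\,\x L(x,\lambda,t),
\]
and comparing with the chain-rule identity $\frac{d}{dt}\x L = \xx L\,\dot x + \x G\,\dot\lambda + \xt f + \xt G\,\lambda$ yields the linear ODE $\frac{d}{dt}\x L(x(t),\lambda(t),t) = -\alpha\,\x L(x(t),\lambda(t),t)$. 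Hence $\x L(x(t),\lambda(t),t) = e^{-\alpha(t-t_0)} v_0$ for the fixed vector $v_0 := \x L(x(t_0),\lambda(t_0),t_0)$.

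Substituting this exponential form back into the standing hypothesis $\x G^\top(x(t),t)\x L \equiv 0$ forces $\x G^\top(x(t),t)\,v_0 = 0$ for every $t\geq t_0$; that is, the fixed direction $v_0$ is orthogonal to the columns of $\x G(x(t),t)$ for a continuum of times. The main obstacle is to rule out $v_0\neq 0$ from this family of orthogonality constraints. I would invoke Assumption \ref{inactiveGi}, which at each $t$ produces an index $i$ with $\x g_i^\top\,\x f\neq 0$, and decompose $v_0 = \x f(x(t_0),t_0) + \x G(x(t_0),t_0)\lambda(t_0)$; testing the identity $\x G^\top(x(t),t)v_0=0$ at varying $t$ then produces relations between $\x G^\top\x f$ and $\x G^\top\x G\,\lambda$ that cannot be reconciled with a nonzero constant $v_0$, forcing $v_0=0$. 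Once $\x L\equiv 0$, Lemma \ref{Lemma_StrongConvexity} identifies the trajectory with $(x\st(t),\lambda\st(t))$ for all $t\geq 0$, completing the argument.
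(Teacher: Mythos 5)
Your ``only if'' direction matches the paper's (it is just the KKT stationarity condition \eqref{KKT1}), but your ``if'' direction has a genuine gap precisely at the point you yourself flag as ``the main obstacle.'' After obtaining $\x L(x(t),\lambda(t),t)=e^{-\alpha(t-t_0)}v_0$ and hence $\x G^\top(x(t),t)\,v_0=0$ for all $t$, you must conclude $v_0=0$, and the sketch you offer --- ``testing the identity at varying $t$ produces relations \dots\ that cannot be reconciled with a nonzero constant $v_0$'' --- is not an argument. Assumption \ref{inactiveGi} relates $\x g_i(x,t)$ to $\x f(x,t)$ at the \emph{same} time and point; it says nothing about the fixed vector $v_0=\x L(x(t_0),\lambda(t_0),t_0)$ being non-orthogonal to the columns of $\x G(x(t),t)$ at later times, and those columns need not span $\R^n$ (Assumption \ref{Assumption_linearindependence} only bounds the number of active constraints by $n$), so the family of orthogonality relations does not force $v_0=0$. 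The paper closes this step differently and much more directly: it reads Assumption \ref{inactiveGi} as saying that $\x G^\top(x,t)\x L(x,\lambda,t)$ cannot vanish identically unless $\x L(x,\lambda,t)$ itself vanishes (the case treated explicitly is $\lambda(\bar t)=0$, where $\X_\lambda$ reduces to the augmented term and $\x L=\x f$, so the assumption applies verbatim), and then invokes Lemma \ref{Lemma_StrongConvexity}. That inference is the entire content of Assumption \ref{inactiveGi} here; your exponential-decay detour does not substitute for it, so your proof is missing its key step.

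A secondary issue: the chain-rule computation $\tfrac{d}{dt}\x L=-\alpha\x L$ uses $\dot\lambda=\X_\lambda$, whereas \eqref{Dyn} sets $\dot\lambda=[\X_\lambda]^+_\lambda$, so an extra term $\x G(x,t)\big([\X_\lambda]^+_\lambda-\X_\lambda\big)$ appears in $\tfrac{d}{dt}\x L$. Under your standing hypothesis this term is a multiple of $\x G\x G^\top\x L$ and hence vanishes (this is exactly the case analysis in the proof of Theorem \ref{TheoremEXponentialStability}), but that has to be said; as written the ODE is asserted for the unprojected field. Finally, note that even granting the exponential form, $\x L\to0$ as $t\to\infty$ would not prove the lemma: the claim is $(x(t),\lambda(t))=(x\st(t),\lambda\st(t))$ for \emph{all} $t$, which by Lemma \ref{Lemma_StrongConvexity} requires $\x L\equiv0$, i.e.\ $v_0=0$ exactly --- which returns you to the gap above.
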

    
    The proof of Lemma \ref{lemmaAdditionalTerm} is given in Appendix \ref{lemmaAdditionalTerm_Proof}.
    Thus the term $H_{aug}(x,\lambda,t)$ guarantees the dynamical system \eqref{Dyn} to track the optimizer trajectories by allowing the Lagrange multiplier to shift from a non-optimal zero value to a positive value. As a result, the proposed dynamical system can track the optimizer trajectories during the switching of the inequality constraints from active to inactive state and vice-versa.
    
    We denote the right-hand side of \eqref{Dyn} by $\X_{pd}:\R^n\times\R^m_{\geq 0}\times\R_{\geq0}\rightarrow\R^n\times\R^m$. 
    The vector field $\X_{pd}$ is discontinuous on the set $\Xi_i(t)\deff\{(x(t),\lambda_i(t))\in\R^n\times\R_{\geq0}:\lambda_i(t)=0\textrm{ and } \left(\X_{\lambda}\right)_i<0\}$ for $i=1,2,\ldots,m$ and $t\geq 0$. 
    To guarantee the existence of a solution of \eqref{Dyn}, the following Lemma expresses the dynamical system \eqref{Dyn} as a projected dynamical system \cite{nagurney1995projected}.
    \begin{lemma}\label{ProjectedSystem}
        The proposed dynamical system $\X_{pd}$ can be represented as a projected dynamical system.
    \end{lemma}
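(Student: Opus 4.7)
The plan is to identify the natural state constraint set $\K \deff \R^n \times \R^m_{\geq 0}$, which is closed, convex, and polyhedral. This is the correct choice because the primal variable $x$ is unconstrained while dual feasibility \eqref{KKT3} requires $\lambda \in \R^m_{\geq 0}$. I would then show that \eqref{Dyn} is equivalent to
\begin{equation*}
    \begin{bmatrix} \dot{x}(t) \\ \dot{\lambda}(t) \end{bmatrix} = \Pi_{\K}\!\left( \begin{bmatrix} x(t) \\ \lambda(t) \end{bmatrix},\; \X(x,\lambda,t) \right),
\end{equation*}
where $\X = (\X_x, \X_\lambda)$ is the (possibly infeasible) vector field defined by \eqref{X_Dynamics}--\eqref{Lambda_Dynamics}, and $\Pi_{\K}$ is the projection operator from the definition given in Section II. This matches the standard form of a projected dynamical system in \cite{nagurney1995projected}.

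Next I would compute $\Pi_{\K}$ componentwise. Because $\K$ is a Cartesian product, the Euclidean projection decouples as $\textrm{proj}_{\K}(x,\lambda) = (x,\; \max\{0,\lambda_1\},\ldots,\max\{0,\lambda_m\})$. Hence the limit defining $\Pi_{\K}$ splits: on the $x$-block, for every $\delta>0$ we have $\textrm{proj}_{\K}(x+\delta \X_x,\cdot)_x = x + \delta \X_x$, which gives $\Pi_x = \X_x$; on each $\lambda_i$-block one must evaluate $\lim_{\delta\to 0^+}[\max\{0,\lambda_i+\delta(\X_\lambda)_i\}-\lambda_i]/\delta$. A short case split handles this: if $\lambda_i(t)>0$, then for all sufficiently small $\delta>0$ we have $\lambda_i+\delta(\X_\lambda)_i>0$, so the limit equals $(\X_\lambda)_i$; if $\lambda_i(t)=0$, then the ratio equals $\max\{0,(\X_\lambda)_i\}$ for every $\delta>0$, so the limit is $\max\{0,(\X_\lambda)_i\}$.

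Comparing these two cases with the paper's componentwise operator
\begin{equation*}
    [b]_a^{+}=\begin{cases} b,& a>0\\ \max\{0,b\},& a=0,\end{cases}
\end{equation*}
one sees that the $\lambda$-component of $\Pi_{\K}((x,\lambda),\X)$ is precisely $[\X_\lambda]_\lambda^{+}$. Thus the right-hand side of \eqref{Dyn} agrees with $\Pi_{\K}((x,\lambda),\X(x,\lambda,t))$, which completes the identification of \eqref{Dyn} as a projected dynamical system on $\K$.

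The only delicate point is not the projection computation itself but ensuring the Nagurney-Zhang framework is applicable: this requires that $\X(x,\lambda,t)$ be well defined and locally Lipschitz on $\K\times\R_{\geq 0}$. Invertibility of $\xx L$ follows from Lemma \ref{Lagragian_strongconvex} under Assumption \ref{Assumption_StrongConvexity}, while invertibility of the approximated Schur complement $\tilde{M}$ is enforced by construction through the strictly positive slack $s_i(t)$; together with Assumption \ref{Assumption_boundedness} these yield the required regularity of $\X$, so that the projected dynamical system \eqref{Dyn} admits a (Carathéodory) solution for every initial condition $(x_0,\lambda_0)\in\K$.
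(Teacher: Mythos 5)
Your proposal is correct and follows essentially the same route as the paper: both identify $\K=\R^n\times\R^m_{\geq 0}$, observe that the projection decouples across the Cartesian product, and verify via the case split on $\lambda_i>0$ versus $\lambda_i=0$ that $\Pi_{\K}((x,\lambda),\X)$ reproduces the operator $[\X_\lambda]_\lambda^{+}$, i.e.\ that the vector field is altered exactly on the sets $\Xi_i(t)$. Your explicit evaluation of the limit defining $\Pi_{\K}$ is slightly more detailed than the paper's, and your closing remarks on regularity and existence of Carath\'eodory solutions are handled by the paper separately after the lemma, but the substance is the same.
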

    The proof of Lemma \ref{ProjectedSystem} is provided in Appendix \ref{Lemma_ProjectedSystem}.
    We consider the solution notion of \eqref{Dyn} in the Carath{\'e}odary sense \cite{NonsmoothCortes}. A map $\phi:[0,T)\rightarrow\R^n\times\R^m$ is a Carath{\'e}odary solution of $\X_{pd}$ on the interval $[0,T)$ if it is absolutely continuous on $[0,T)$ and satisfies $\dot{\phi}(t)=\X(\phi(t))$ almost everywhere in $[0,T)$. Since we assign a particular value to the vector field on the discontinuous set $\Xi_i(t)$ to achieve the dual feasibility, the corresponding set-valued map obtained through the regularization \cite{HALKrasovskiiCaratheodary} of \eqref{Dyn} effectively reduces to a map with a singleton set. As a result, the Carath{\'e}odary solution of \eqref{Dyn} becomes same as the Krasovskii solution \cite{SHu} of the corresponding set-valued map and thus the existence of a Carath{\'e}odary solution to the dynamical system \eqref{Dyn} is guaranteed by \cite[Theorem 2]{projectedPowerSystem} for all $t\geq 0$.

    In the following subsection, we uncover the main contribution of the paper, which establishes the asymptotic convergence of the trajectories of the proposed dynamical system \eqref{Dyn} to the optimizer trajectory the problem \eqref{Ineq}.
    \subsection{Stability Analysis}

\begin{theorem}\label{TheoremEXponentialStability}
        Under the Assumptions \ref{Assumption_StrongConvexity}, \ref{Assumption_SlatersCondition}, \ref{Assumption_linearindependence} and \ref{inactiveGi}, the solution $(x\st(t),\lambda{\st(t)})$ of the dynamical system \eqref{Dyn} is locally asymptotically stable.
    \end{theorem}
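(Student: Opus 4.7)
The plan is to establish local asymptotic stability through a Lyapunov argument in error coordinates $\tilde x = x - x\st(t)$, $\tilde\lambda = \lambda - \lambda\st(t)$. First I would verify that $(\tilde x,\tilde\lambda)\equiv 0$ is an invariant trajectory of \eqref{Dyn}: along $(x\st,\lambda\st)$ the KKT conditions \eqref{KKT1}--\eqref{KKT4} yield $\x L(x\st,\lambda\st,t)=0$ and $G_d(x\st,t)\lambda\st(t)=0$, so $H_{corr}$ vanishes; by Lemma \ref{lemmaAdditionalTerm}, $H_{aug}$ also vanishes. Consequently \eqref{JSystem} restricted to the optimizer reduces to $J(x\st,\lambda\st,t)[\dot x\st;\dot\lambda\st]^\top = H_{pred}(x\st,\lambda\st,t)$, which coincides with the characterization \eqref{OptimalityCondition} since $H_{pred}=-H$. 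The projection $[\cdot]^+_\lambda$ is trivially satisfied along $\lambda\st$ because Assumption \ref{Assumption_StrictComplementary} places the non-zero components in the open positive orthant, while components with $\lambda\st_i=0$ correspond to strictly inactive constraints whose $\X_\lambda$-component vanishes.

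Next I would take the candidate $V(\tilde x,\tilde\lambda)=\tfrac{1}{2}\|\tilde x\|^2+\tfrac{1}{2}\|\tilde\lambda\|^2$ and differentiate along the error dynamics. Left-multiplying the error-system analogue of \eqref{JSystem} by $J$ to avoid the explicit inverse and then subtracting the optimizer equation, the leading contribution to $\dot V$ comes from the block-structured feedback in $H_{corr}$, namely $-\alpha\,\x L(x,\lambda,t)$ in the primal block and $G_d(x,t)\lambda(t)$ in the dual block. By Lemma \ref{Lagragian_strongconvex} the Lagrangian is $\mu$-strongly convex in $x$, so $\bigl(\x L(x,\lambda,t)-\x L(x\st,\lambda,t)\bigr)^\top\tilde x \ge \mu\|\tilde x\|^2$; combined with the Lipschitz bound from Assumption \ref{Assumption_boundedness}, the primal term yields $-\alpha\mu\|\tilde x\|^2$ plus a cross-coupling of order $\|\tilde x\|\|\tilde\lambda\|$. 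Expanding $G_d(x,t)\lambda - G_d(x\st,t)\lambda\st$ around the optimizer and invoking strict complementary slackness (Assumption \ref{Assumption_StrictComplementary}), the dual term contributes a strictly negative-definite quadratic form in $\tilde\lambda$ on the active index set, while the inactive indices are pinned at zero by the projection. For $\alpha$ large enough the diagonal quadratic terms dominate the cross terms in a neighborhood of the origin, giving $\dot V<0$.

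The main obstacle will be reconciling three complications simultaneously. First, the discontinuous projection $[\cdot]^+_\lambda$ forces work in the Carath\'eodory framework of Lemma \ref{ProjectedSystem}; fortunately the projection only clamps $\dot\lambda_i$ when the unprojected dynamics would drive $\lambda_i$ negative, so it can only improve the decay of $V$. Second, the Schur-complement approximation $\tilde M = M - S(t)$ induces a mismatch $\tilde J^{-1}-J^{-1}$ that must be treated as a time-vanishing perturbation (since $s_i(t)\to 0$) and absorbed through a small-gain or robust-stability estimate. Third, the augmented term $H_{aug}$ must be prevented from cancelling the corrective action of $H_{corr}$; Lemma \ref{lemmaAdditionalTerm} together with Assumption \ref{inactiveGi} guarantees that $\x G^\top \x L$ vanishes \emph{only} on the optimizer trajectory, so off-optimum $H_{aug}$ produces a genuine restoring contribution rather than a destabilizing one and in particular forbids convergence to a non-optimal point on the boundary $\lambda_i = 0$. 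Coordinating these three mechanisms so that $\dot V$ is sign-definite uniformly in $t$ within a suitable neighborhood of $(x\st(t),\lambda\st(t))$ constitutes the bulk of the technical effort, after which local asymptotic stability follows from a standard time-varying Lyapunov theorem.
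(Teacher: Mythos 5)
Your proposal takes a genuinely different route from the paper, but it has a gap at the decisive step. The paper does not work in error coordinates at all: it takes $V=\tfrac12\vectornorm{\x L(x,\lambda,t)}^2$, for which the dynamics were purpose-built. Substituting \eqref{X_Dynamics}--\eqref{Lambda_Dynamics} into $\dot V=\x L^\top\left[\xx L\,\dot x+\xt f+\xt G\lambda+\x G\dot\lambda\right]$ makes every $\tilde M^{-1}$ term, every prediction term, and the $\x G\x G^\top\x L$ terms cancel exactly, leaving $\dot V=\x L^\top\left[-\alpha\x L+W(t)\right]$ with the projection mismatch $W$ contributing either $0$ or $-\vectornorm{\x G^\top\x L}^2$; hence $\dot V\le-2\alpha V$ for \emph{every} $\alpha>0$, and Lemma \ref{Lemma_StrongConvexity} identifies $\{V=0\}$ with the optimizer trajectory. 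Your quadratic-in-the-error candidate $V=\tfrac12\vectornorm{\tilde x}^2+\tfrac12\vectornorm{\tilde\lambda}^2$ does not enjoy any of these cancellations, and the blockwise "leading contribution" reading of $H_{corr}$ is not valid: the realized vector field is $\tilde J^{-1}(H_{pred}+H_{corr})+H_{aug}$, so the primal and dual components are entangled through $\xx L^{-1}$ and $\tilde M^{-1}$ and are not simply $-\alpha\x L$ and $G_d\lambda$.

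Two specific claims would fail as stated. First, the dual term: near the optimizer, active constraints satisfy $g_i(x,t)\approx\x g_i(x\st,t)^\top\tilde x$, so $G_d(x,t)\lambda(t)-G_d(x\st,t)\lambda\st(t)$ is, to leading order, a primal--dual \emph{cross} term of size $\vectornorm{\tilde x}\,\vectornorm{\tilde\lambda}$, not a negative-definite quadratic form in $\tilde\lambda$; strict complementarity does not rescue this. With no genuine $-\kappa\vectornorm{\tilde\lambda}^2$ term on the diagonal, "take $\alpha$ large to dominate the cross terms" does not close, and even if it did it would prove a weaker statement than the theorem, which holds for all $\alpha>0$. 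Second, your invariance check uses $J^{-1}$, but the system integrates $\tilde J^{-1}=J^{-1}$ only when $S(t)=0$; the optimizer trajectory is therefore not exactly invariant for \eqref{Dyn}, and your error dynamics acquire a non-vanishing (only asymptotically vanishing) perturbation that must be handled before any negativity of $\dot V$ can be asserted. The paper's choice of Lyapunov function makes this issue moot, since the $\tilde M^{-1}$ blocks cancel identically in $\frac{d}{dt}\x L$ regardless of $S(t)$. Your treatment of the projection (clamping can only help) and your use of Lemma \ref{lemmaAdditionalTerm} for the role of $H_{aug}$ are sound, but the core inequality $\dot V<0$ is not established.
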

    \begin{proof}
        Consider the following candidate Lyapunov function
        $V:\R^n\times\R^m_{\geq0}\times\R_{\geq0}\rightarrow\R$,
        \begin{align}\label{Lyap1} 
          V(x,\lambda,t)= \frac{1}{2}\vectornorm{\x L(x,\lambda,t)}^2
        \end{align}
        with 
        $V(x,\lambda,t)=0$ if and only if $(x(t),\lambda(t))=(x\st(t),\lambda\st(t))$ by Lemma \ref{Lemma_StrongConvexity}. 
        Then the derivative of $V(x,\lambda,t)$ with respect to time along the trajectories of the dynamical system \eqref{Dyn} is,
        \begin{align*}
            \dot{V} =& \x L(x,\lambda,t)^\top\Bigg[\xx L(x,\lambda,t)\dot{x}(t)+\xt f(x,t)\\
            &+\xt G(x,t) \lambda(t)+\x G(x,t)\dot{\lambda}(t)\Bigg]\\
            =& \x L(x,\lambda,t)^\top\Bigg[\xx L(x,\lambda,t)\dot{x}(t)+\xt f(x,t)\\
            &+\xt G(x,t)\lambda(t)+\x G(x,t)\mathcal{X}_{\lambda}\\
            &+\x G(x,t)\left(\left[\mathcal{X}_{\lambda}\right]_{\lambda}^+-\mathcal{X}_{\lambda}\right) \Bigg].
        \end{align*}
        Substituting \eqref{X_Dynamics} and \eqref{Lambda_Dynamics} in $\dot{V}$ results in
        \begin{align}
            \dot{V} &=\x L^\top\left[-\alpha\x L+\x G(x,t)\big(\left[\mathcal{X}_{\lambda}\right]_{\lambda}^+-\mathcal{X}_{\lambda}\big)\right].\label{Vdot}
        \end{align}
        We evaluate the $i^{th}$ element of the term $W(t)\deff\x G(x,t)\big(\left[\mathcal{X}_{\lambda}\right]_{\lambda}^+-\mathcal{X}_{\lambda}\big)$ in three different cases as follows:
        \begin{enumerate}[i.]
            \item $\lambda_i>0$, which implies $\left[\left(\mathcal{X}_{\lambda}\right)_i\right]_{\lambda_i}^+=\left(\mathcal{X}_\lambda\right)_i$ and $W_i(t)=0$.
            
            \item $\lambda_i=0$ and $\max\{0,\left(\mathcal{X}_\lambda\right)_i\}=\left(\mathcal{X}_\lambda\right)_i$, then $\left[\left(\mathcal{X}_{\lambda}\right)_i\right]_{\lambda_i}^+=\left(\mathcal{X}_\lambda\right)_i$ and $W_i(t)=0$.
            
            \item $\lambda_i=0$ and $\max\{0,\left(\mathcal{X}_\lambda\right)_i\}=0$, then $\left[\left(\mathcal{X}_\lambda\right)_i\right]_{\lambda_i}^+=0$ and 
                \begin{equation*}
                    W(t) = -\x G(x,t)\x G^\top(x,t)\x L(x,\lambda,t).
                \end{equation*}
        \end{enumerate}
        Then \eqref{Vdot} becomes,
        \begin{align}
            \dot{V}&=-\alpha\x L^\top(x,\lambda,t)\x L(x,\lambda,t)\nonumber\\
            &-\x L^\top(x,\lambda,t)\x G(x,t)\x G^\top(x,t)\x L(x,\lambda,t)\nonumber\\
            &=-\alpha\vectornorm{\x L(x,\lambda,t)}^2-\vectornorm{\x G^\top(x,t)\x L(x,\lambda,t)}^2\nonumber\\
            &\leq-\alpha\vectornorm{\x L(x,\lambda,t)}^2\nonumber\\
            \dot{V}&\leq -2\alpha V.\label{vdotfinal}
        \end{align}
        Then by \cite[Theorem 3.1]{khalil2002nonlinear} the set $\mathcal{Z}(t)$ is locally asymptotically stable. 
        By Lemma \eqref{Lemma_StrongConvexity}, it leads to the conclusion that the trajectories of the proposed dynamical system \eqref{Dyn} asymptotically converge to the unique optimizer trajectory $(x\st(t),\lambda\st(t))$ of the TV inequality constrained convex optimization problem \eqref{Ineq}.
    \end{proof}
%%%%%%%%%%%%%%%%%%%%%%%%%%%%%%%%%%%%%%%%%%%%%%%%%%%%%%%%%%%%%%%%%%%%%%%%%    
\section{Fixed-time convergence of Inequality constrained TV convex optimization problems} 
    The dynamical system proposed in the previous section can track the optimizer trajectory of the underlying optimization problem in an asymptotic sense, that is, the trajectories of the proposed system \eqref{Dyn} converge to the optimizer trajectory as $t\rightarrow \infty$. Therefore to track the optimizer trajectory of the optimization problem \eqref{Ineq} in a fixed time, we modify the correction term $H_{corr}$ in \eqref{JSystem} as follows:
    \begin{equation}
        \tilde{H}_{corr} = 
            \begin{bmatrix}
                \textcolor{blue}{-}\dfrac{c_1\x L(x,\lambda,t)}{\vectornorm{\x L(x,\lambda,t)}^{\gamma_1}}\textcolor{blue}{-}\dfrac{c_2\x L(x,\lambda,t)}{\vectornorm{\x L(x,\lambda,t)}^{\gamma_2}}\\
                G_d(x,t)\lambda(t)
            \end{bmatrix}
    \end{equation}
    where $c_1,c_2\in\R_{>0}$, $\gamma_1\in(0,1)$, $\gamma_2<0$ and we define $\dfrac{\x L(x,\lambda,t)}{\vectornorm{\x L(x,\lambda,t)}^{\gamma_1}}=0$ if $\x L(x,\lambda,t)=0$ for all $t\geq 0$. 
    This modification in the correction term results in the following dynamical system:
    \begin{equation}\label{FxtsDyn}
        \begin{aligned}
            \dot{x}(t) &= \tilde{\mathcal{X}}_x(x,\lambda,t),\quad &x(0)=x_0\\
            \dot{\lambda}(t) &= \left[\tilde{\mathcal{X}}_{\lambda}(x,\lambda,t)\right]_{\lambda}^+,\quad &\lambda(0)=\lambda_0
        \end{aligned}
    \end{equation}
    where $\tilde{\mathcal{X}}_x:\R^n\times\R^m\times\R_{\geq0}\rightarrow\R^n$ and $\tilde{\mathcal{X}}_{\lambda}:\R^n\times\R^m\times\R_{\geq0}\rightarrow\R^m$ are defined as 
    \begin{equation}\label{FXtsVectorField}
        \begin{bmatrix}
            \tilde{\mathcal{X}}_x\\\tilde{\mathcal{X}}_{\lambda}
        \end{bmatrix}=
        \tilde{J}^{-1}\left(H_{pred}+\tilde{H}_{corr}\right)+H_{aug}.
    \end{equation}
    The dynamical system \eqref{FxtsDyn} is a nonsmooth  due to the projection operator which enforces the  nonnegativity of the Lagrange multipliers. However the vector fields $\tilde{X}_x$ and $\tilde{\mathcal{X}}_{\lambda}$ is continuous for all $(x(t),\lambda(t))\in\R^n\times\R^m$, as guaranteed by the following Lemma.
    \begin{lemma}\label{Lemma_ContinuityofFXTS}
        Under the Assumptions \ref{Assumption_StrongConvexity}, \ref{Assumption_SlatersCondition} and \ref{Assumption_linearindependence}-\ref{Assumption_boundedness}, the right hand side of \eqref{FXtsVectorField} is continuous for all $x(t)\in\R^n$ and $\lambda(t)\in\R^m$.
    \end{lemma}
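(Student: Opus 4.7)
The plan is to decompose the right-hand side of \eqref{FXtsVectorField} into its three building blocks and verify continuity of each: the approximated inverse $\tilde{J}^{-1}(x,\lambda,t)$, the prediction-plus-correction term $H_{pred}+\tilde{H}_{corr}$, and the augmented term $H_{aug}$. Since sums, products, and compositions of continuous functions are continuous, once each block is shown to be continuous on $\R^n\times\R^m$, the conclusion follows. The only place where continuity is genuinely at risk is the modified correction term $\tilde{H}_{corr}$, because of the fractions $\x L/\vectornorm{\x L}^{\gamma_1}$ and $\x L/\vectornorm{\x L}^{\gamma_2}$, whose denominators vanish on the zero set of $\x L(x,\lambda,t)$.

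The main obstacle, therefore, is to verify continuity of $\tilde{H}_{corr}$ at points where $\x L(x,\lambda,t)=0$. For points with $\x L\neq 0$, continuity follows immediately from continuity of $\x L(x,\lambda,t)$ (which is implied by Assumption \ref{Assumption_StrongConvexity} and the $\mathcal{L}_p$-Lipschitz property in Assumption \ref{Assumption_boundedness}) together with the fact that $x\mapsto 1/\vectornorm{x}^{\gamma}$ is continuous on $\R^n\setminus\{0\}$ for any $\gamma\in\R$. At points where $\x L(x,\lambda,t)=0$, I would bound the first fraction by
\begin{equation*}
\left\Vert\frac{\x L(x,\lambda,t)}{\vectornorm{\x L(x,\lambda,t)}^{\gamma_1}}\right\Vert=\vectornorm{\x L(x,\lambda,t)}^{1-\gamma_1},
\end{equation*}
which tends to $0$ as $\x L\to 0$ because $\gamma_1\in(0,1)$ implies $1-\gamma_1>0$, matching the value $0$ assigned at $\x L=0$. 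For the second fraction, since $\gamma_2<0$, the exponent $1-\gamma_2>1$ and the same argument yields $\vectornorm{\x L}^{1-\gamma_2}\to 0$. Thus both terms extend continuously across the zero set, and since $G_d(x,t)\lambda(t)$ is continuous by Assumption \ref{Assumption_StrongConvexity}, $\tilde{H}_{corr}$ is continuous on $\R^n\times\R^m$.

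Next I would handle $\tilde{J}^{-1}(x,\lambda,t)$. Under Assumption \ref{Assumption_StrongConvexity} the Hessian $\xx L$ is uniformly positive definite (by Lemma \ref{Lagragian_strongconvex}), hence invertible and continuous, so $\xx L^{-1}$ is continuous. The Schur approximation $\tilde{M}=M-S(t)$ is, by construction of the slack matrix $S(t)$, invertible for all $t\ge 0$, and its entries depend continuously on $(x,\lambda,t)$ through $\x G$, $G_d$, and $\xx L^{-1}$; because matrix inversion is continuous on the open set of invertible matrices, $\tilde{M}^{-1}$ is continuous. The explicit block inversion formula that defines $\tilde{J}^{-1}$ is then a finite combination of continuous functions, so $\tilde{J}^{-1}$ is continuous on $\R^n\times\R^m\times\R_{\geq 0}$.

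Finally, $H_{pred}$ and $H_{aug}$ involve only $\xt f$, $\xt G$, $\nabla_t G$, $G_d$, $\x G$, $\x L$, and $\xx L^{-1}$, each of which is continuous by Assumptions \ref{Assumption_StrongConvexity} and \ref{Assumption_boundedness}. Combining the three blocks, the right-hand side of \eqref{FXtsVectorField} is continuous on $\R^n\times\R^m$ for every $t\geq 0$, completing the argument. The only nontrivial step—and the one I would expect a referee to scrutinize—is the extension-by-zero at the zero set of $\x L$, which is why I would spell out the two exponent calculations above explicitly.
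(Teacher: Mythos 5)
Your proposal is correct and follows essentially the same route as the paper: the decisive step in both is the observation that $\bigl\Vert \x L/\vectornorm{\x L}^{\gamma_i}\bigr\Vert = \vectornorm{\x L}^{1-\gamma_i}\to 0$ as $\x L\to 0$ (the paper phrases this via the $\mathcal{L}_p$-Lipschitz bound of Assumption 6, giving $\mathcal{L}_p^{1-\gamma_1}\vectornorm{z-z\st}^{1-\gamma_1}\to 0$, while you invoke continuity of $\x L$ directly), with the remaining blocks continuous by composition. If anything, your write-up is slightly more complete, since you treat the $\gamma_2$ term and the continuity of $\tilde{J}^{-1}$, $H_{pred}$ and $H_{aug}$ explicitly, which the paper's proof leaves implicit.
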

    The proof of Lemma \ref{Lemma_ContinuityofFXTS} is provided in Appendix \eqref{Proof_Lemma_Continuity}. Thus the Lemma \ref{Lemma_ContinuityofFXTS} ensures that the discontinuity in the proposed dynamical system is only due to the projection operator in the $\lambda(t)$ dynamics. Therefore the existence of the Carath{\'e}odary solutions to the dynamical system \eqref{FxtsDyn} is ensured with the similar arguments from the previous section.\\
    The following Proposition guarantees the convergence of the trajectories of \eqref{FxtsDyn} to the optimizer trajectory of the TV optimization problem \eqref{Ineq} in fixed-time.
    \begin{proposition}\label{Proposition_FXTs}
        Under the Assumptions \ref{Assumption_StrongConvexity}, \ref{Assumption_SlatersCondition} and \ref{Assumption_linearindependence}-\ref{Assumption_boundedness}, the set $\mathcal{Z}(t)$  of the projected dynamical system \eqref{FxtsDyn} is fixed-time stable and its trajectories converge to the primal-dual optimizer trajectory $(x\st(t),\lambda\st(t))$ of the optimization problem \eqref{Ineq} within a fixed time for any initial conditions $x(0)\in\R^n$ and $\lambda(0)\in\R^m_{\geq0}$. Moreover, the settling time function of \eqref{FxtsDyn} is upper bounded by $T_s\leq T_{\max}=\frac{1}{c_1\gamma_1}2^{\frac{\gamma_1}{2}}-\frac{1}{c_2\gamma_2}2^{\frac{\gamma_2}{2}}$.
    \end{proposition}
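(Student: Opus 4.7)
The plan is to mirror the Lyapunov argument used in Theorem \ref{TheoremEXponentialStability} with the same candidate function
\begin{equation*}
V(x,\lambda,t) = \frac{1}{2}\vectornorm{\x L(x,\lambda,t)}^2,
\end{equation*}
and to show that the modified correction block $\tilde{H}_{corr}$ upgrades the exponential decay inequality $\dot V \leq -2\alpha V$ into the bi-rate inequality required by Lemma \ref{L2}. By Lemma \ref{Lemma_StrongConvexity} the zero set of $V$ coincides with the primal-dual optimizer trajectory, so fixed-time convergence of $V$ to zero is equivalent to fixed-time convergence of $(x(t),\lambda(t))$ to $(x\st(t),\lambda\st(t))$.

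First I would differentiate $V$ along trajectories of \eqref{FxtsDyn} using the chain rule, substitute $\dot x = \tilde{\X}_x$ and $\dot\lambda = [\tilde{\X}_\lambda]_\lambda^+$, and expand via \eqref{FXtsVectorField}. The prediction block $H_{pred}$ and augmented block $H_{aug}$ are inherited unchanged from \eqref{JSystemApprox}, hence the entire cancellation pattern of Theorem \ref{TheoremEXponentialStability}, including the three-case analysis of the projection operator $[\cdot]_\lambda^+$ that produces either $0$ or the extra nonpositive term $-\vectornorm{\x G^\top \x L}^2$, carries over verbatim. The only algebraic change is that the vector $-\alpha\x L$ appearing in $H_{corr}$ is replaced by $-c_1\x L/\vectornorm{\x L}^{\gamma_1}-c_2\x L/\vectornorm{\x L}^{\gamma_2}$; after the $\xx L$ and $\xx L^{-1}$ factors cancel against $\x L^\top$ in the computation of $\dot V$, this yields
\begin{equation*}
\dot V \leq -c_1 \vectornorm{\x L}^{2-\gamma_1} - c_2\vectornorm{\x L}^{2-\gamma_2}.
\end{equation*}

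Next, substituting $\vectornorm{\x L}=\sqrt{2V}$ gives
\begin{equation*}
\dot V \leq -c_1 2^{(2-\gamma_1)/2}\,V^{(2-\gamma_1)/2} - c_2 2^{(2-\gamma_2)/2}\,V^{(2-\gamma_2)/2}.
\end{equation*}
I would set $a=c_1 2^{(2-\gamma_1)/2}$, $b=c_2 2^{(2-\gamma_2)/2}$, $p=(2-\gamma_1)/2$, $q=(2-\gamma_2)/2$ and $k=1$. The hypotheses $\gamma_1\in(0,1)$ and $\gamma_2<0$ translate to $pk=p\in(1/2,1)<1$ and $qk=q>1$, so Lemma \ref{L2} applies and yields fixed-time stability along with, using $1-p=\gamma_1/2$ and $q-1=-\gamma_2/2$ in \eqref{T_settle_Fxts},
\begin{equation*}
T_{\max}=\frac{1}{a(1-p)}+\frac{1}{b(q-1)}=\frac{2^{\gamma_1/2}}{c_1\gamma_1}-\frac{2^{\gamma_2/2}}{c_2\gamma_2},
\end{equation*}
which is exactly the claimed bound (the second term is positive since $\gamma_2<0$).

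The main obstacle I anticipate is not the Lyapunov bookkeeping, which reuses Theorem \ref{TheoremEXponentialStability} essentially line for line, but rather handling the exceptional set $\{\x L=0\}$: the modified correction $\tilde H_{corr}$ involves $\x L/\vectornorm{\x L}^{\gamma_1}$, which is only defined off this set, and one must invoke the convention $\x L/\vectornorm{\x L}^{\gamma_1}\equiv 0$ together with Lemma \ref{Lemma_ContinuityofFXTS} to justify that the computed $\dot V$ bound holds in the Carath\'eodory sense everywhere. On that set $V$ already equals zero and, by Lemma \ref{Lemma_StrongConvexity}, the trajectory has already reached the optimizer, so the inequality is satisfied trivially and does not obstruct the application of Lemma \ref{L2}.
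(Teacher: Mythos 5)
Your proposal is correct and follows essentially the same route as the paper's own proof: the identical Lyapunov function $V=\frac{1}{2}\vectornorm{\x L}^2$, the same reuse of the cancellation and projection case analysis from Theorem \ref{TheoremEXponentialStability} to reach $\dot V\leq -c_1\vectornorm{\x L}^{2-\gamma_1}-c_2\vectornorm{\x L}^{2-\gamma_2}$, and the same application of Lemma \ref{L2} with $k=1$ yielding the stated $T_{\max}$. Your explicit treatment of the set $\{\x L=0\}$ via the zero convention and Lemma \ref{Lemma_ContinuityofFXTS} is a point the paper leaves implicit, but it changes nothing substantive.
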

    \begin{proof}
        To proceed with the proof, we consider the following candidate Lyapunov function
        $V_1:\R^n\times\R^m_{\geq0}\times R_{\geq0}\rightarrow\R_{\geq0}$:
        \begin{align}\label{LyapunovFunction}
            V_1 = \dfrac{1}{2}\vectornorm{\x L(x,\lambda,t)}^2
        \end{align}
        with $V_1(x,\lambda,t)=0$ if and only if $(x(t),\lambda(t))=(x\st(t),\lambda\st(t))$ by Lemma \ref{Lemma_StrongConvexity}.
        The derivative of $V_1(x,\lambda,t)$ with respect to time along the trajectories of  \eqref{FxtsDyn} is,
        \begin{align*}
            \dot{V}_1 =& \x L(x,\lambda,t)^\top\Bigg[\xx L(x,\lambda,t)\dot{x}(t)+\xt f(x,t) \\
            &+\xt G(x,t)\lambda(t)+\x G(x,t)\dot{\lambda}(t)\Bigg]\\
            =& \x L(x,\lambda,t)^\top\Bigg[\xx L(x,\lambda,t)\dot{x}(t)+\xt f(x,t) \\
            &+\xt G_(x,t)\lambda(t)+\x G(x,t)\mathcal{X}_{\lambda}\\
            &+\x G(x,t)\left(\left[\mathcal{X}_{\lambda}\right]_{\lambda}^+-\mathcal{X}_{\lambda}\right)\Bigg].
        \end{align*}
        On substituting the vector fields $\tilde{\mathcal{X}}_x$ and $\tilde{\mathcal{X}}_{\lambda}$, we get
        \begin{align*}
            \dot{V}_1 &=\x L^\top(x,\lambda,t)\bigg[-\sum_{i=1}^2\dfrac{c_i\x L(x,\lambda,t)}{\vectornorm{\x L(x,\lambda,t)}^{\gamma_i}}\\
            &+\x G(x,t)\left(\left[\mathcal{X}_{\lambda}\right]_{\lambda}^+-\mathcal{X}_{\lambda}\right)\bigg]\\
            &\leq\x L^\top(x,\lambda,t)\bigg[-\sum_{i=1}^2\dfrac{c_i\x L(x,\lambda,t)}{\vectornorm{\x L(x,\lambda,t)}^{\gamma_i}}\\
            &-\x L^\top(x,\lambda,t)\x G(x,t)\x G^\top(x,t)\x L(x,\lambda,t)\bigg]\\
            &=-\sum_{i=1}^2c_i\vectornorm{\x L(x,\lambda,t)}^{2-\gamma_i}\\
            &-\vectornorm{\x G^\top(x,t)\x L(x,\lambda,t)}^2\\
            &\leq -c_1\vectornorm{\x L(x,\lambda,t)}^{2-\gamma_1}-c_2\vectornorm{\x L(x,\lambda,t)}^{2-\gamma_2}\\
            &=-\left(\alpha_1V_1^{1-\frac{\gamma_1}{2}}+\alpha_2 V_1^{1-\frac{\gamma_2}{2}}\right)^k
        \end{align*}
        where $\alpha_1\deff c_12^{1-\frac{\gamma_1}{2}}$, $\alpha_2\deff c_22^{1-\frac{\gamma_2}{2}}$ and $k=1$. Then by \cite[Lemma 1]{Polyakov:FxTS}, $\mathcal{Z}(t)$ of the system \eqref{FxtsDyn} is fixed-time stable. Hence by Lemma \ref{Lemma_StrongConvexity}, the trajectories of the proposed dynamical system \eqref{FxtsDyn} converges to the unique primal-dual optimizer trajectory of  problem \eqref{Ineq} in fixed-time irrespective of the system initialization with a settling-time function $T_s$ upper bounded by $T_{\max}=\frac{1}{c_1\gamma_1}2^{\frac{\gamma_1}{2}}-\frac{1}{c_2\gamma_2}2^{\frac{\gamma_2}{2}}$. 
    \end{proof}
    \begin{remark}
        To achieve the convergence to the optimizer trajectories in fixed-time, the slack matrix is chosen such that $S(t)\rightarrow0$ as $t\rightarrow T_{\max}$. In other words, $s_i(t)>0$ for all $t\geq 0$ and $s_i(t)=0$ for all $t\geq T_{max}$. 
    \end{remark}
    The following Corollary establishes the conditions for achieving finite-time stability of the proposed dynamical system \eqref{FxtsDyn}.
    \begin{corollary}[Finite time Convergence]
        If the parameter $c_2=0$ in the proposed dynamics \eqref{FxtsDyn}, then the set $\mathcal{Z}(t)$ is finite-time stable and it guarantees the finite-time convergence of the system trajectories to the optimizer trajectory of the inequality constrained TV optimization problem \eqref{Ineq} depending on the system initialization. Moreover it results in a settling-time function $T(x_0,\lambda_0)\leq\frac{1}{c_1\gamma_1}\vectornorm{\x L(x_0,\lambda_0,t_0)}^{\gamma_1}$.
    \end{corollary}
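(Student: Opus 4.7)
The plan is to specialize the Lyapunov computation from the proof of Proposition \ref{Proposition_FXTs} to the case $c_2 = 0$ and then invoke a standard finite-time stability result instead of the fixed-time Lemma \ref{L2}. I would reuse the same candidate Lyapunov function $V_1(x,\lambda,t) = \tfrac{1}{2}\vectornorm{\x L(x,\lambda,t)}^2$, which by Lemma \ref{Lemma_StrongConvexity} vanishes precisely on the set $\mathcal{Z}(t)$, and repeat the chain-rule computation along the trajectories of \eqref{FxtsDyn}. The projection contribution $\x G(x,t)\left(\left[\mathcal{X}_{\lambda}\right]_{\lambda}^+-\mathcal{X}_{\lambda}\right)$ is handled exactly as in the three-case analysis of Theorem \ref{TheoremEXponentialStability}, yielding the same non-positive quadratic term $-\vectornorm{\x G^\top(x,t)\x L(x,\lambda,t)}^2$ that can be discarded.

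With $c_2 = 0$, the Lyapunov derivative then collapses to the single-term bound
\begin{equation*}
    \dot{V}_1 \leq -c_1\vectornorm{\x L(x,\lambda,t)}^{2-\gamma_1}.
\end{equation*}
Rewriting $\vectornorm{\x L}^{2-\gamma_1} = (2V_1)^{1-\gamma_1/2}$, this becomes $\dot{V}_1 \leq -c_1\, 2^{1-\gamma_1/2}\, V_1^{1-\gamma_1/2}$. Since $\gamma_1 \in (0,1)$, the exponent $1-\gamma_1/2$ lies strictly in $(0,1)$, which is precisely the standard hypothesis of finite-time stability (e.g.\ the Bhat--Bernstein type result). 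Hence $\mathcal{Z}(t)$ is finite-time stable, and by Lemma \ref{Lemma_StrongConvexity} the trajectories of \eqref{FxtsDyn} converge to $(x\st(t),\lambda\st(t))$ in finite time.

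For the settling-time bound, I would separate variables in the scalar comparison inequality $\dot{V}_1 \leq -c_1 2^{1-\gamma_1/2} V_1^{1-\gamma_1/2}$ and integrate from $t_0$ to the first time $T$ at which $V_1 = 0$. A routine integration gives
\begin{equation*}
    T - t_0 \leq \frac{V_1(x_0,\lambda_0,t_0)^{\gamma_1/2}}{c_1 \cdot 2^{1-\gamma_1/2}\cdot (\gamma_1/2)} = \frac{2^{-\gamma_1/2}\vectornorm{\x L(x_0,\lambda_0,t_0)}^{\gamma_1}}{c_1\, 2^{-\gamma_1/2}\gamma_1} = \frac{1}{c_1\gamma_1}\vectornorm{\x L(x_0,\lambda_0,t_0)}^{\gamma_1},
\end{equation*}
which is the claimed dependence on the initial condition (so, unlike Proposition \ref{Proposition_FXTs}, the settling time is no longer uniform in the initialization).

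The only substantive point to take care of is justifying that the projection-induced term still drops out in the bound (which it does, by the same case analysis on $\lambda_i$ used in Theorem \ref{TheoremEXponentialStability}), and that the definition $\dfrac{\x L}{\vectornorm{\x L}^{\gamma_1}} = 0$ at $\x L = 0$ extends $\dot{V}_1 \leq 0$ continuously across the optimizer, so that $V_1$ remains at zero once reached. These are both routine verifications; I do not anticipate a genuine technical obstacle, as the corollary is essentially a scalar specialization of Proposition \ref{Proposition_FXTs} with the $c_2$-term switched off.
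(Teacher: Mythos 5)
Your proposal is correct and follows essentially the same route as the paper, which simply invokes the time-varying finite-time stability theorem of Haddad et al.\ with the Lyapunov function $V_1=\tfrac{1}{2}\vectornorm{\x L}^2$; you have merely filled in the specialization $c_2=0$, the three-case handling of the projection term, and the separation-of-variables integration that yields the bound $\frac{1}{c_1\gamma_1}\vectornorm{\x L(x_0,\lambda_0,t_0)}^{\gamma_1}$, all of which check out. The only cosmetic remark is that, since $V_1$ depends explicitly on $t$, the finite-time stability result you cite should be the time-varying version (as the paper does) rather than the autonomous Bhat--Bernstein statement, though your comparison-inequality argument covers this anyway.
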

    The Corollary can be proved using the Lyapunov based analysis for finite-time stability of time-varying systems given in \cite[Theorem 4.1]{Haddad:FTS_ACC} with the Lyapunov function \eqref{LyapunovFunction}.

     To validate the prediction-correction based projected primal-dual dynamical systems presented in this work and to establish a simulation based comparison with the existing prediction correction dynamical system from \cite{PredCorr:Mehyar}, we consider an example of a TV inequality constrained convex optimization problem in the next subsection.

    \subsection{Numerical Example}
     Consider the following inequality constrained TV convex optimization problem:
     \begin{equation}\label{NumEx}
         \begin{aligned}
             \min_x\enspace &\frac{1}{2}\left(x_1+\sin(t)\right)^2+\frac{3}{2}\left(x_2+\cos(t)\right)^2\\
             \textrm{s.t.} \enspace & x_2-x_1-\cos(t) \leq0,
         \end{aligned}
     \end{equation}
     \begin{figure}
        \centering
        \includegraphics[width=\linewidth]{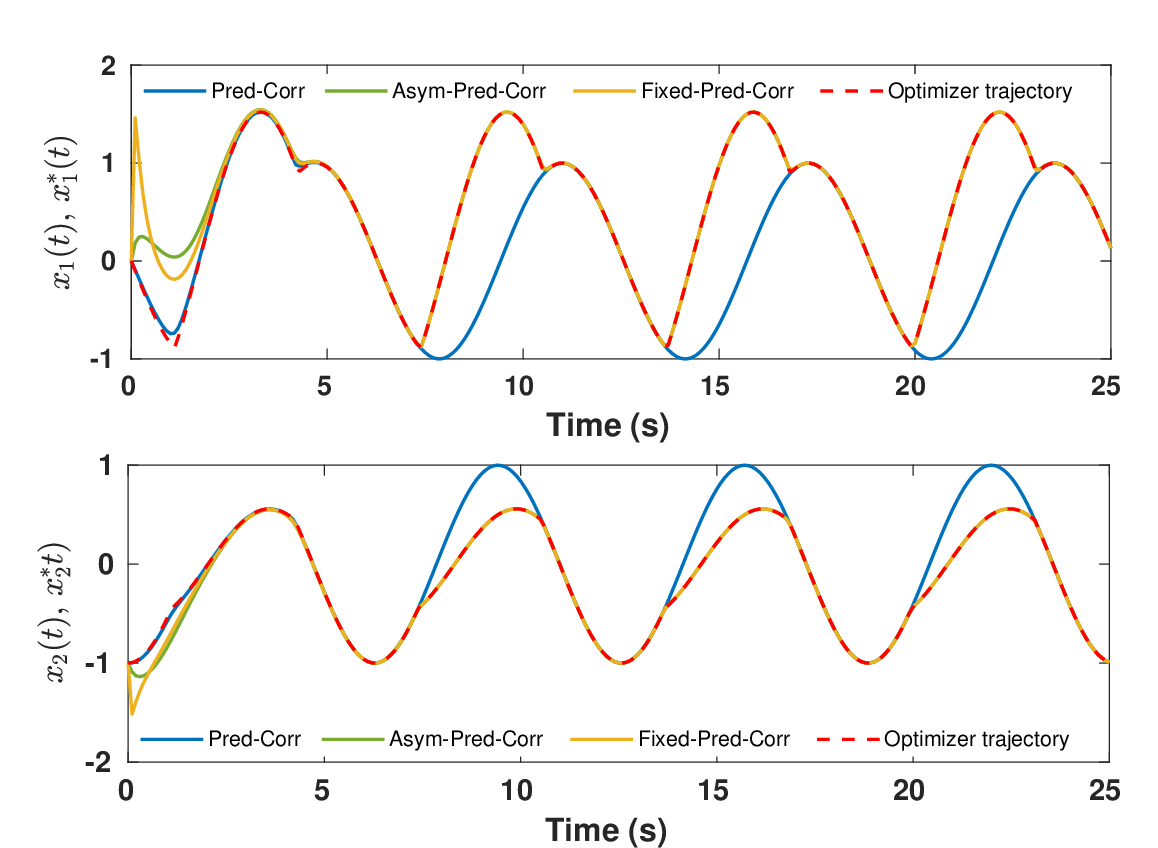}
        \caption{Convergence of system trajectory of various dynamical systems to the optimizer trajectory of \eqref{NumEx}.}
        \label{NumFig}
    \end{figure}
     with $x=\left(x_1,x_2\right)\in\R^2$. To track the optimal trajectory of the problem \eqref{NumEx} and to make a comparative study with the existing dynamical system approach, we implement the proposed dynamical systems \eqref{Dyn}, \eqref{FxtsDyn} and the prediction correction dynamics from \cite{PredCorr:Mehyar} with initial condition $x(0)=(2,1)$ and $\lambda(0)=4$ using \texttt{ode} solvers in \texttt{MATLAB.} The dynamical systems presented in this work are implemented with parameters $\alpha=2$, $c_1=c_2=1$, $\gamma_1=0.2$, $\gamma_2=-2$ and slack variable $s(t)=\left(\vectornorm{g(x)}+0.01\right)e^{-t}$, whereas the dynamical system from \cite{PredCorr:Mehyar} is implemented with the same parameters as given in their work. The coordinate wise convergence of the system trajectory $x(t)$ of various dynamical systems towards the optimizer trajectory $x\st(t)$, obtained through batch process using the \texttt{CVX} toolbox, is depicted in Figure \ref{NumFig}. The trajectories of `Pred-Corr' in Figure \ref{NumFig} corresponds to the trajectories generated with the prediction-correction dynamical system presented in \cite{PredCorr:Mehyar}, `Asym-Pred-Corr' and `Fixed-Pred-Corr' are the trajectories of asymptotic prediction-correction dynamical system \eqref{Dyn} and the fixed time prediction-correction dynamical system \eqref{FxtsDyn}. The results demonstrate that the trajectories of the asymptotic and fixed time projected dynamical systems presented in this work, accurately track the optimizer trajectory after an initial transient phase of $3$ seconds. However, the prediction-correction dynamical system from \cite{PredCorr:Mehyar} fails to adequately track the sharp changes in the optimizer trajectory, resulting in periodic intervals where the tracking is unsuccessful. This example clearly highlights the advantages of the proposed projected dynamical system over existing approaches for solving a TV inequality constrained convex optimization problems. 

    In the next section, as an application of the presented approach we formulate the TV counterpart of the extended Fermat-Torricelli problem introduced in \cite{rejitha} and tracks the solution of the illustrative examples with the asymptotic and fixed-time dynamical approaches presented in this work.
%%%%%%%%%%%%%%%%%%%%%%%%%%%%%%%%%%%%%%%%%%%%%%%%%%%%%%%%%%%%%%%%%%%%%%%%%
\section{Time-Varying extended Fermat-Torricelli Problem: Tracking of the optimizer trajectory}
\begin{figure*}[t]
        \centering
        \includegraphics[width=\linewidth]{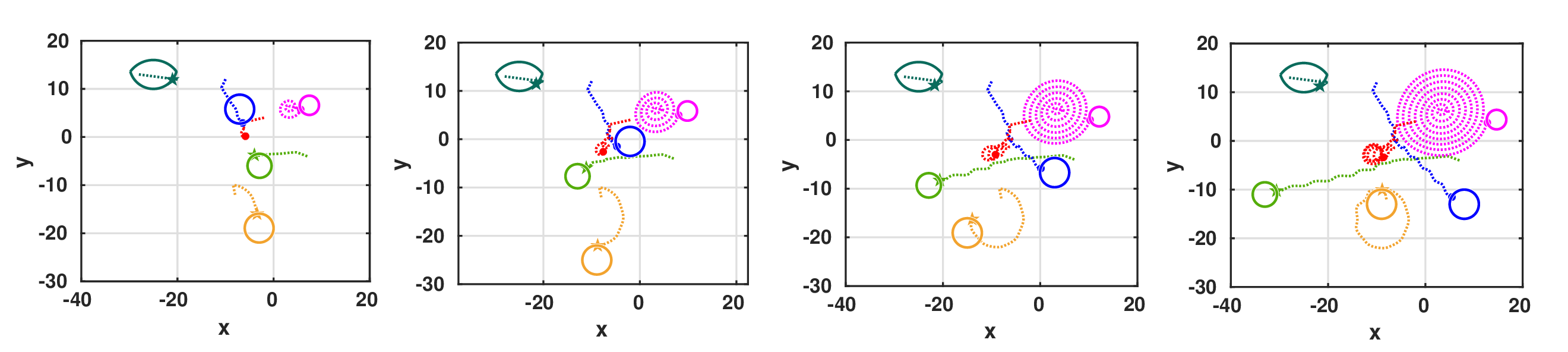}
        \caption{Example 1: Evolution of the trajectories of the fixed-time dynamical system \eqref{FxtsDyn} with with time. The red dotted line indicates the optimizer trajectory $x\st(t)$ that minimizes the sum-of-squared distances. Subplots corresponds to snapshots at $t=T_f/4,T_f/2,3T_f/4$ and $T_f$ with $T_f=50$ seconds.}
        \label{fig1_exp}
    \end{figure*}
   An eFTP is an optimization problem that solves for a point/set of points that minimizes the sum-of-distances to a finite number of time-invariant, nonempty, closed and convex sets in an $n$-dimensional Euclidean space with at least one bounded convex set. It also solves the points on each convex set, which contributes to the minimum sum-of-distances. The eFTP is formulated in \cite{rejitha} as follows:
   \begin{equation}\label{ftp_timeInvariant}
        \begin{aligned}
            \min_{X,X_i}\enspace&\sum_{i=1}^m\vectornorm{X-X_i}\\
            \textrm{s.t.}\enspace& X_i\in\Omega_i,\quad i=1,2,\ldots,q
        \end{aligned}
    \end{equation}
    where $\Omega_i\subset \R^n$ is the $i^{\textrm{th}}$ convex constraint set and can be rewritten as a set of convex inequality constraints and $m$ is the total number of inequality constraints required to represent the $q$ convex constraint sets. In \eqref{ftp_timeInvariant}, the point $X\in\R^n$ minimizes the sum-of-distances to each of the convex sets $\Omega_i$ and $X_i\in\Omega_i$ is the point corresponding to the minimal sum-of-distances on the $i^{\textrm{th}}$ convex set $\Omega_i$. In \cite{rejitha}, the authors proposed a nonsmooth projected primal-dual dynamical system to solve the eFTP \eqref{ftp_timeInvariant} in both centralized and distributed manner. The asymptotic convergence of the trajectories of the system proposed in \cite{rejitha} to the optimal points of the eFTP is guaranteed and proved by analyzing the semistability of the proposed nonsmooth dynamical system.
    
    In this work, we consider the TV version of the eFTP, by allowing the constraint sets to change with respect to time by preserving the convexity, closedness and nonempty feature of the sets. In other words, we solve the eFTP with TV nonempty, closed and convex sets. To ensure Assumption \ref{Assumption_StrongConvexity} holds, we approximate the objective function of \eqref{ftp_timeInvariant} with the sum-of-squared distances to the given TV convex sets and reformulate the TV eFTP as:
    \begin{equation}\label{TVeFTP}
        \begin{aligned}
            \min_{X(t),X_i(t)}\enspace&\sum_{i=1}^m \vectornorm{X(t)-X_i(t)}^2\\
        \textrm{s.t. }\enspace& g_i(X_i,t)\leq0,\quad i=1,2,\ldots,m.
        \end{aligned}
    \end{equation}
    We consider two different examples in this section to illustrate two different scenarios in a TV eFTP.  The first example contains time-varying convex sets as well as time-invariant convex sets and portraits the convergence of the system trajectories to the optimizer trajectories of the eFTP as time progresses. While the second example illustrates the important property of the proposed dynamical systems that allows the switching of the Lagrange multipliers from a zero optimal value to a positive optimal value during the active to inactive state switching of the corresponding inequality constraints. The examples are simulated in \texttt{MATLAB} with \texttt{ode} solvers. To verify the optimizer trajectory obtained through the proposed approaches, we generate the optimizer trajectory of the underlying TV convex optimization problem in a batch process using \texttt{CVX} toolbox in \texttt{MATLAB}. 
    \begin{table}[h!]
        \caption{Time varying constraint sets of Example \ref{Ex1}}
        \label{Tab1}
        \begin{tabular}{|c|c|c|ll}
            \cline{1-3}
                Set 
                    & \begin{tabular}[c]{@{}c@{}}Inequality constraint \\ at $t=t_0$\end{tabular}                                   & TV reference trajectory 
                        &  &  \\
            \cline{1-3}
                1  
                    & \begin{tabular}[c]{@{}c@{}}$\vectornorm{C_1(t)-X_1(t)}^2\leq 9$\\ with $ C_1(t_0) = (-12,12)$\end{tabular}      & $C_1(t)=\begin{pmatrix}0.4t\\-0.5t\end{pmatrix}$     &  &  \\ 
            \cline{1-3}
                2                  
                    & \begin{tabular}[c]{@{}c@{}}$\vectornorm{C_2(t)-X_2(t)}^2\leq 4$\\ with $ C_2(t_0) = (5,7)$\end{tabular}      & $C_2(t)=\begin{pmatrix}0.2t\cos (t)\\0.2t\sin(t)\end{pmatrix}$     &  &  \\ 
            \cline{1-3}
                3                  
                & \begin{tabular}[c]{@{}c@{}}$\vectornorm{C_3(t)-X_3(t)}^2\leq 2.25$\\ with $ C_3(t_0) = (7,-3)$\end{tabular} 
                    & $ C_3(t)=\begin{pmatrix}-0.8t\\-\frac{8}{6}(1+0.1t)\end{pmatrix}$            
                    &  &  \\ 
            \cline{1-3}
                4            
                    & \begin{tabular}[c]{@{}c@{}}$\vectornorm{C_4(t)-X_4(t)}^2\leq 9$\\ with $ C_4(t_0) = (-9,-13)$\end{tabular}      & $C_4(t)=\begin{pmatrix}6\sin (\frac{\pi}{25} t)\\6(\cos (\frac{\pi}{25} t)-1)\end{pmatrix}$ &  &  \\ 
            \cline{1-3}
                \multirow{2}{*}{5} 
                    & $(x_5+25)^2+(y_5-10)^2\leq 36$                    
                        & Time-invariant                                  &  &  \\ 
            \cline{2-3}
                  & $(x_5+25)^2+(y_5-15)^2\leq 25$                           & Time-invariant                                       &  &  \\ 
            \cline{1-3}
        \end{tabular}
    \end{table}
	\begin{example}\label{Ex1}
        Consider the example as shown in Fig. \ref{fig1_exp}, where the centre of the disks are continuously changing in time by following a specific TV path assigned to it. The set constraints and the corresponding TV pattern of the centre are provided in Table \ref{Tab1}. The centre of the disks-1 (blue) and disk-3 (green) moves linearly; disk-2 (magenta) moves spirally, disk-4 (yellow) moves in a circular path and the eye shaped set is stationary.  The evolution of the trajectories of the fixed-time convergent system \eqref{Dyn} with $\alpha=2$, $s_i(t)=\left(\vectornorm{g_i(x,t)}+\delta\right)e^{-t}$ where $\delta=0.1$ for $i=1,2,\ldots,m$ over time is shown in Fig.\ref{fig1_exp}. Figure \ref{fig_Traj_Exp} depicts the asymptotic convergence of its trajectories to the optimizer trajectory obtained with the \texttt{CVX} toolbox. The fixed-time dynamical system is implemented with parameters $c_1=1$, $c_2=1$, $\gamma_1=0.2$ and $\gamma_2=-2$ and which leads to an upper bound $T_{\max}$ of $5.6089$ seconds on the settling-time function. The trajectories of the fixed-time dynamical system converges to the optimizer trajectory before the upper-bound on the settling-time function as shown in Fig.\ref{fig_Traj_Fxts}. The slack variable for the fixed-time convergent system is chosen to depend on $T_{\max}$ as $s_i(t)=\left(\vectornorm{g_i(x,t)}+\rho\right)\left(1-e^{-(t-T_{\max})/t}\right)\left(1-\tanh(kt)\right)$, $\rho=0.001$ and $k=1$. The evolution of the dual optimizer trajectories of the asymptotic and fixed-time convergent system are shown in Fig. \ref{L1a} and Fig.\ref{L1b} respectively, which clearly shows the nonnegativity of the Lagrange multipliers over time. 
             \begin{figure}
        \centering
        \includegraphics[width=\linewidth]{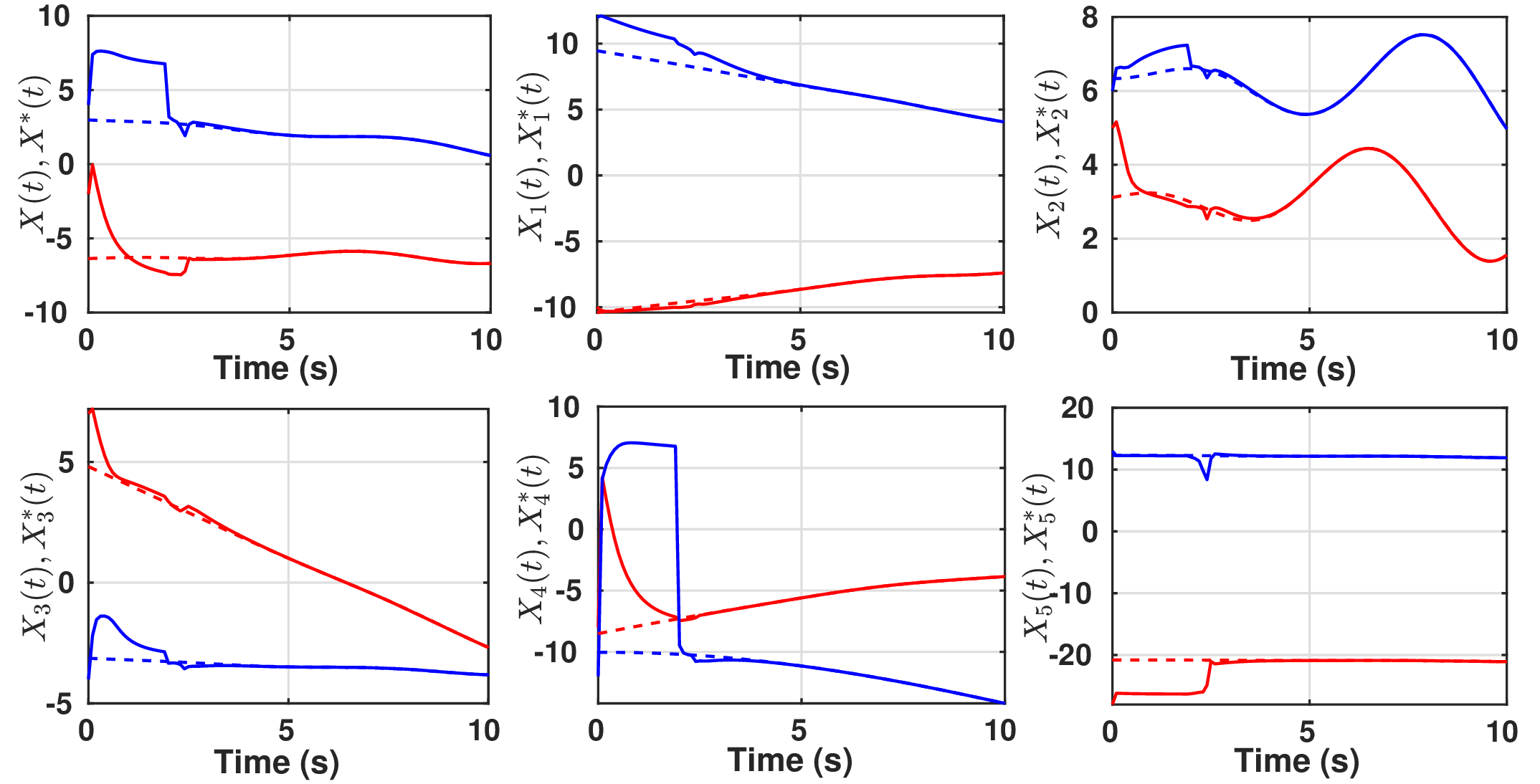}
        \caption{Example 1: Convergence of the trajectories of the system \eqref{Dyn} (solid line) to the optimizer trajectories (dashed line) obtained with \texttt{CVX}. The red and blue line indicates the $x$ and $y$ components of the trajectory.}
        \label{fig_Traj_Exp}
    \end{figure}
    \begin{figure}
        \centering
        \includegraphics[width=\linewidth]{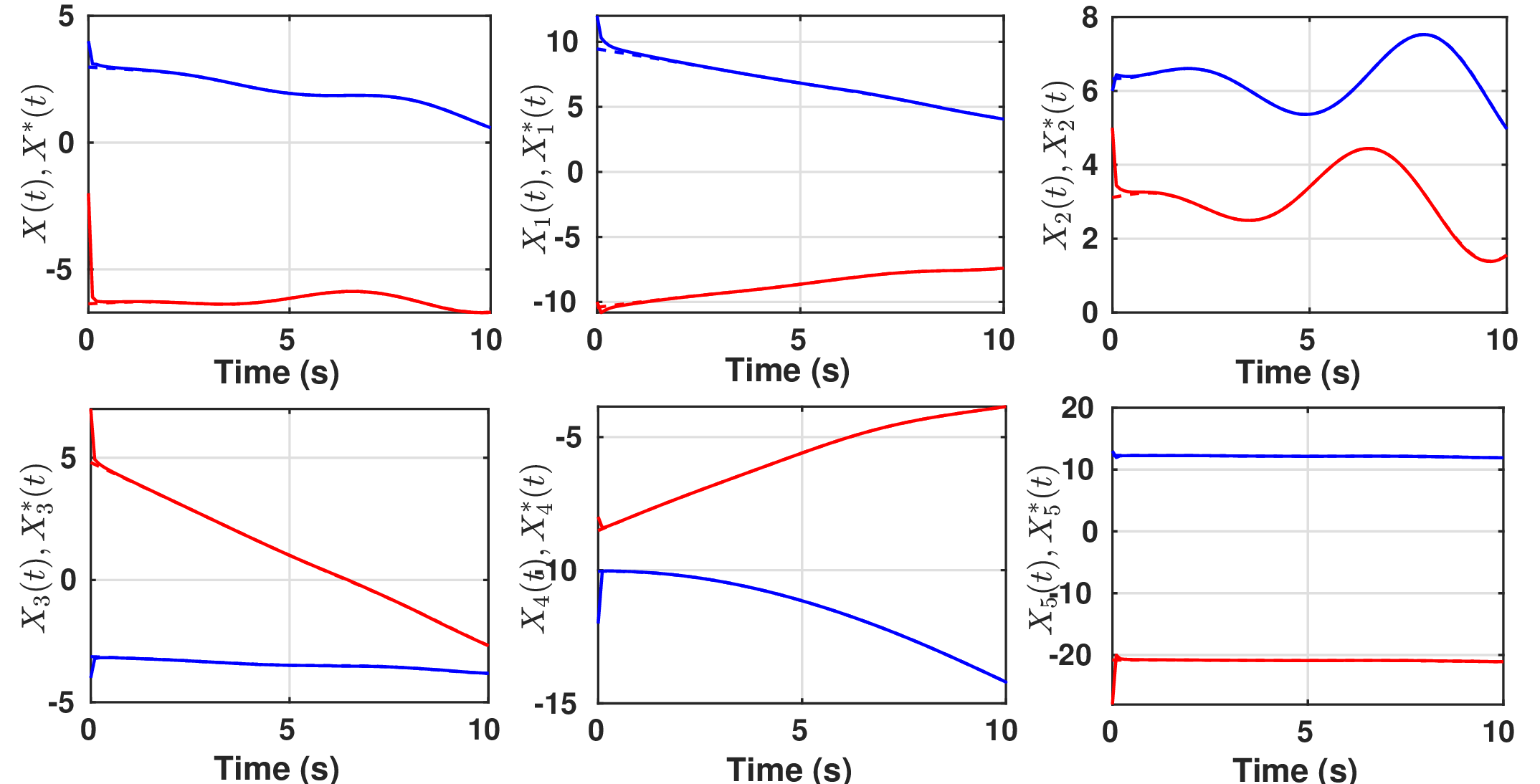}
        \caption{Example 1: Fixed-time convergence of the trajectories of the system \eqref{FxtsDyn} (solid line) to the optimizer trajectories (dashed line) obtained with \texttt{CVX}. The red and blue line indicates the $x$ and $y$ components of the trajectory.}
        \label{fig_Traj_Fxts}
    \end{figure}
     \begin{figure}
        \centering
        \subfloat[Asymptotically convergent system\label{L1a}]{\includegraphics[width=0.48\linewidth]{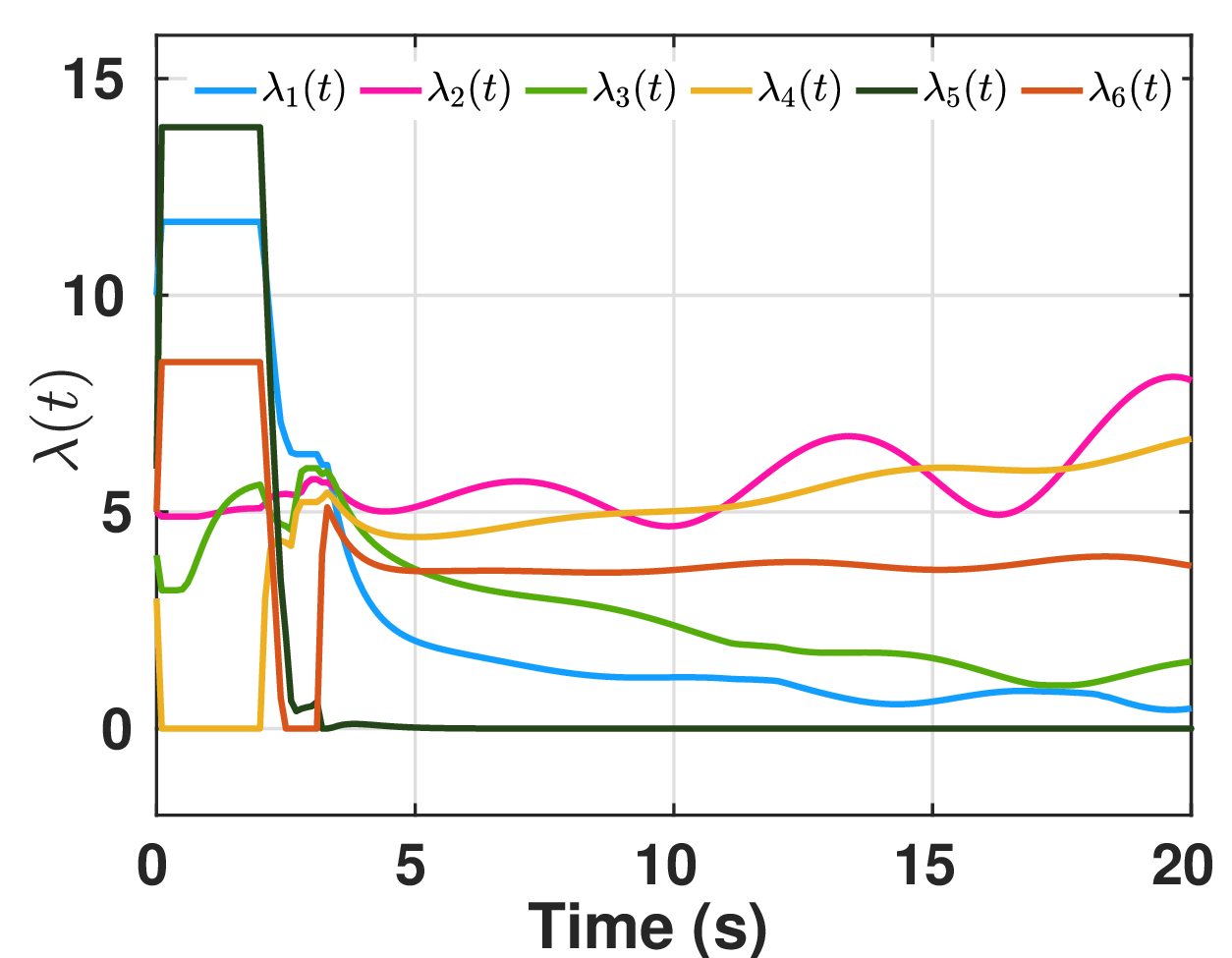}}
        \hfill
        \subfloat[Fixed-time convergent system\label{L1b}]{\includegraphics[width=0.48\linewidth]{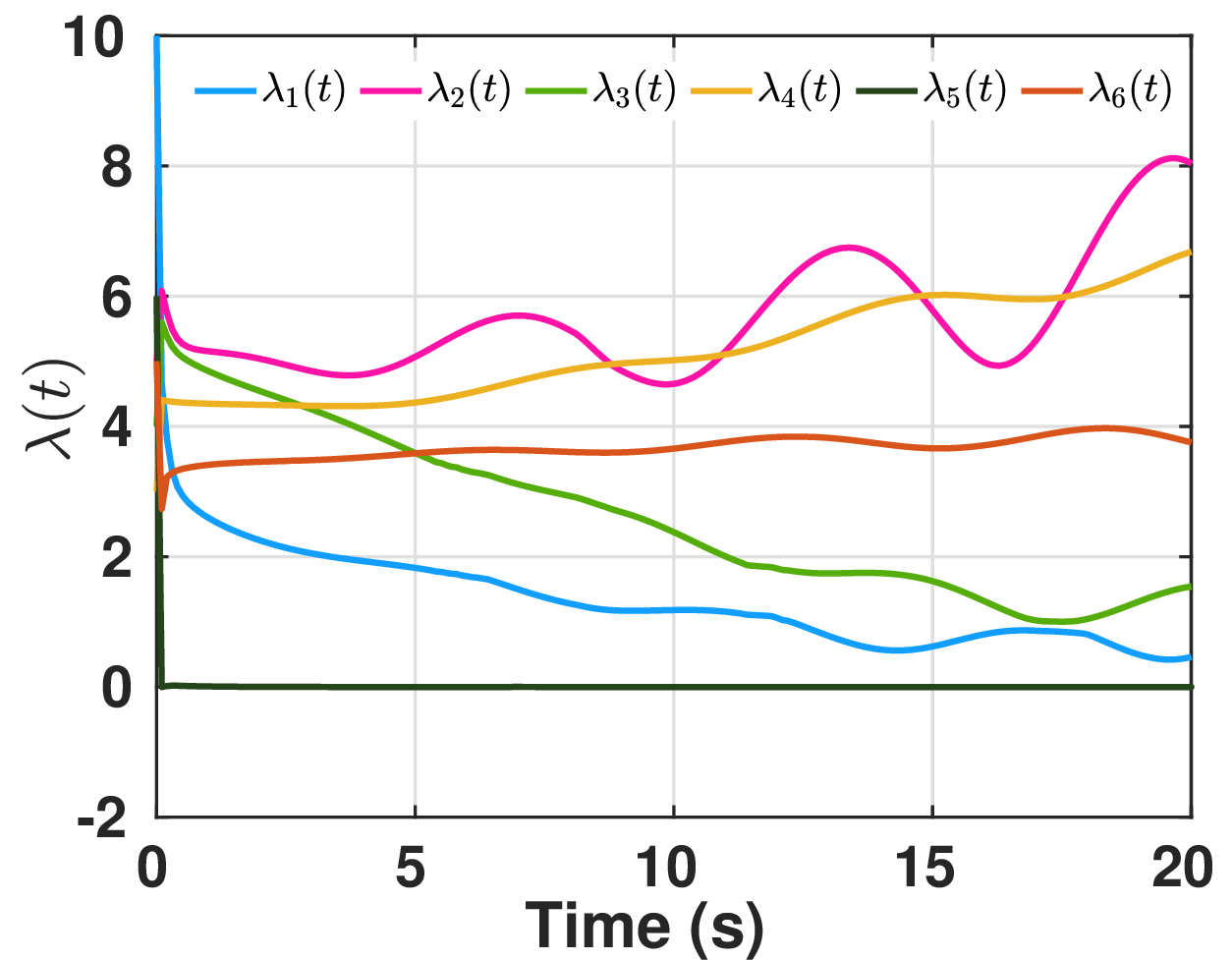}}
        \caption{Evolution of the dual trajectories associated with the Example \ref{Ex1}}
        \label{fig_LAm_Exp}
    \end{figure}
    \end{example}
    \begin{figure*}[t]
        \centering
        \includegraphics[width=\linewidth]{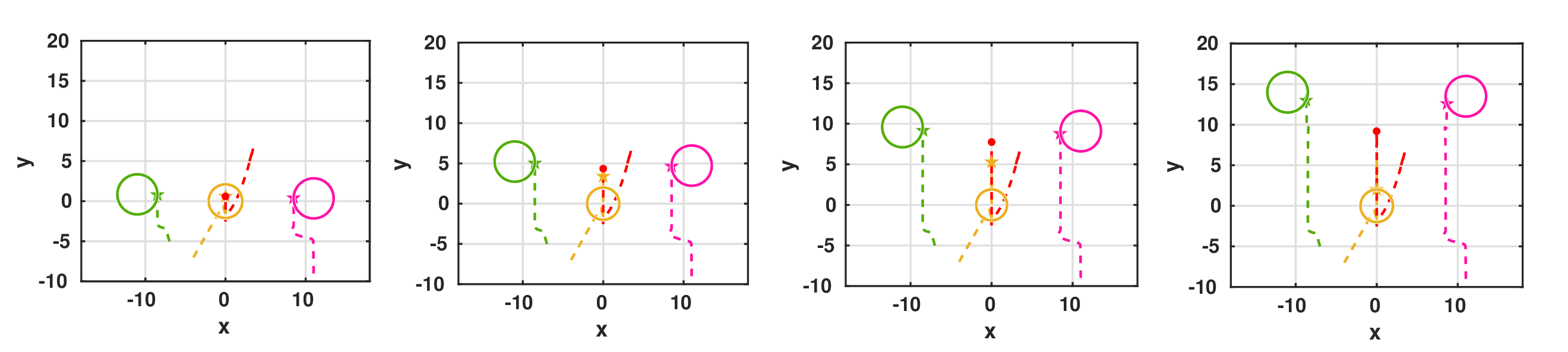}
        \caption{Example 2: Evolution of the trajectories of  \eqref{Dyn} with $\alpha=2$ over time. The red dotted line indicates the optimizer trajectory $x\st(t)$. Snapshots corresponds to the time instants $t=T_f/4,T_f/2,3T_f/4$ and $T_f$ with $T_f=70$ seconds.}
        \label{fig2_exp}
    \end{figure*}

    \begin{example}
        Consider three circular disks centred at (-11,-3.5), (0,0) and (11,-4) with radius 2.5 cm, 2 cm and 2.5 cm respectively as the TV convex sets at $t=0$. The convex sets are moving in such a way that the first and third disk moves along the positive y-axis, and the radius of the second disk oscillates about its initial value. That is, the centre of $\Omega_1$ and $\Omega_3$ follows the TV reference trajectory $C_i(t)=C_i(0)+(0, 0.25t)$ for $i=1,3$ and the radius of $\Omega_2$  follows the TV trajectory $r_2(t)=r_2(0)+0.1\sin(0.03\pi t)$. The TV nature of $r(t)$ allows the periodic expansion and contraction of the disk $\Omega_2$. This example portraits one of important feature of the proposed dynamical system, that allows the dual optimizer trajectory to escape from zero to a positive value. The evolution of the optimizer trajectory of the Example 2 over time is shown in Fig. \ref{fig2_exp}.  Initially, the convex sets are far apart and aligned in such a way that the centre of each disk lies on the vertex of a triangle. Therefore, the corresponding optimal points on each convex set lie on the boundary of the disk. Consequently, the inequality constraints are active, which leads to a positive Lagrange multiplier corresponding to each inequality constraint. As time progresses, the sets $\Omega_1(t)$ and $\Omega_3(t)$ start moving along the positive y-axis and align the disks in a straight line. Subsequently, the optimal points $X\st$ and $X_2\st$ move to the interior of the constraint set $\Omega_2$. 
            \begin{figure}[h!]
        \centering
        \includegraphics[width=\linewidth]{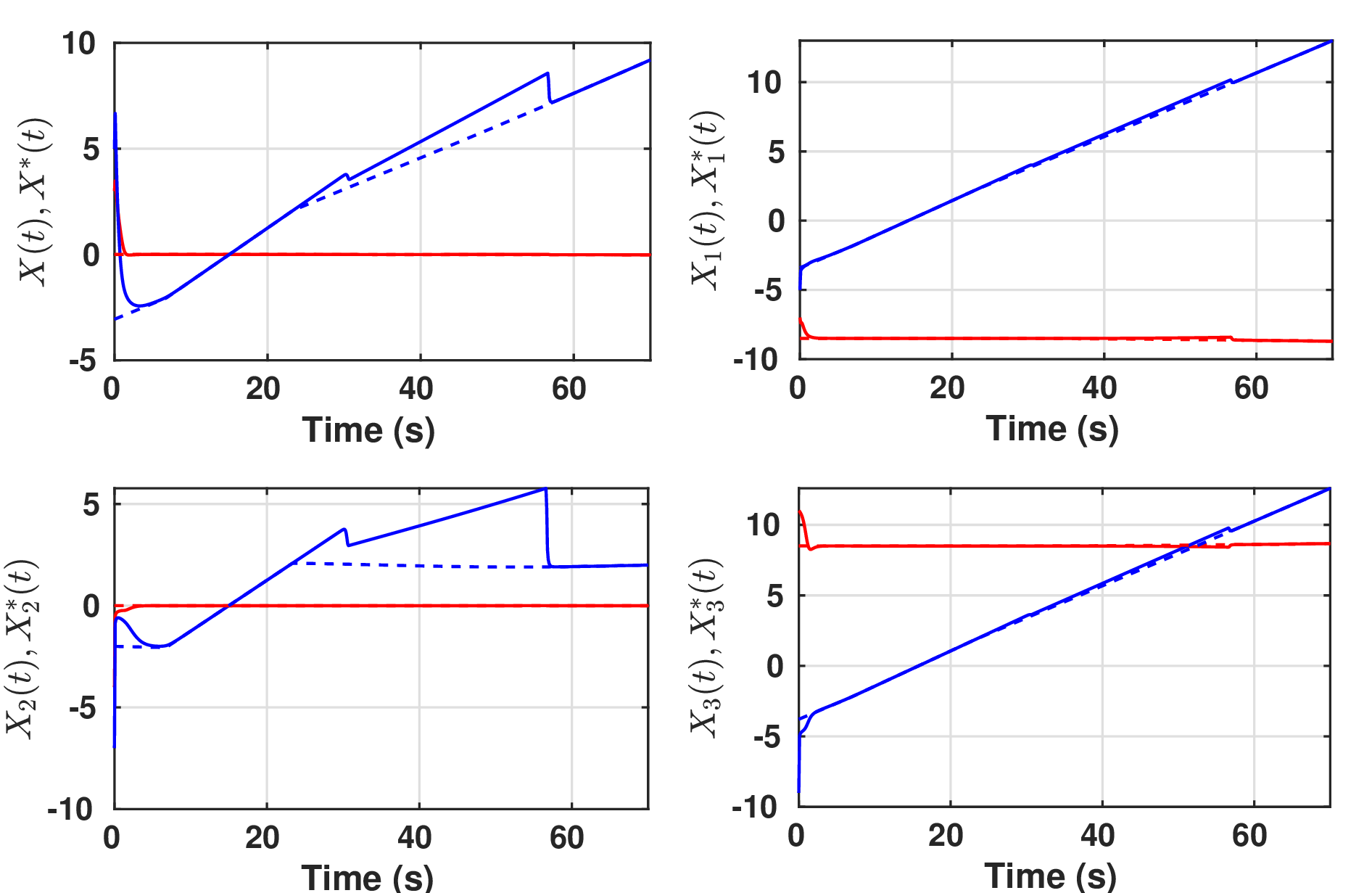}
        \caption{Example 2: Asymptotic convergence of the trajectories of the system \eqref{Dyn} (solid line) to the optimizer trajectories (dashed line) obtained with \texttt{CVX}. The red and blue line indicates the $x$ and $y$ components of the trajectory.}
        \label{fig2_Traj_Exp}
        \centering
        \includegraphics[width=\linewidth]{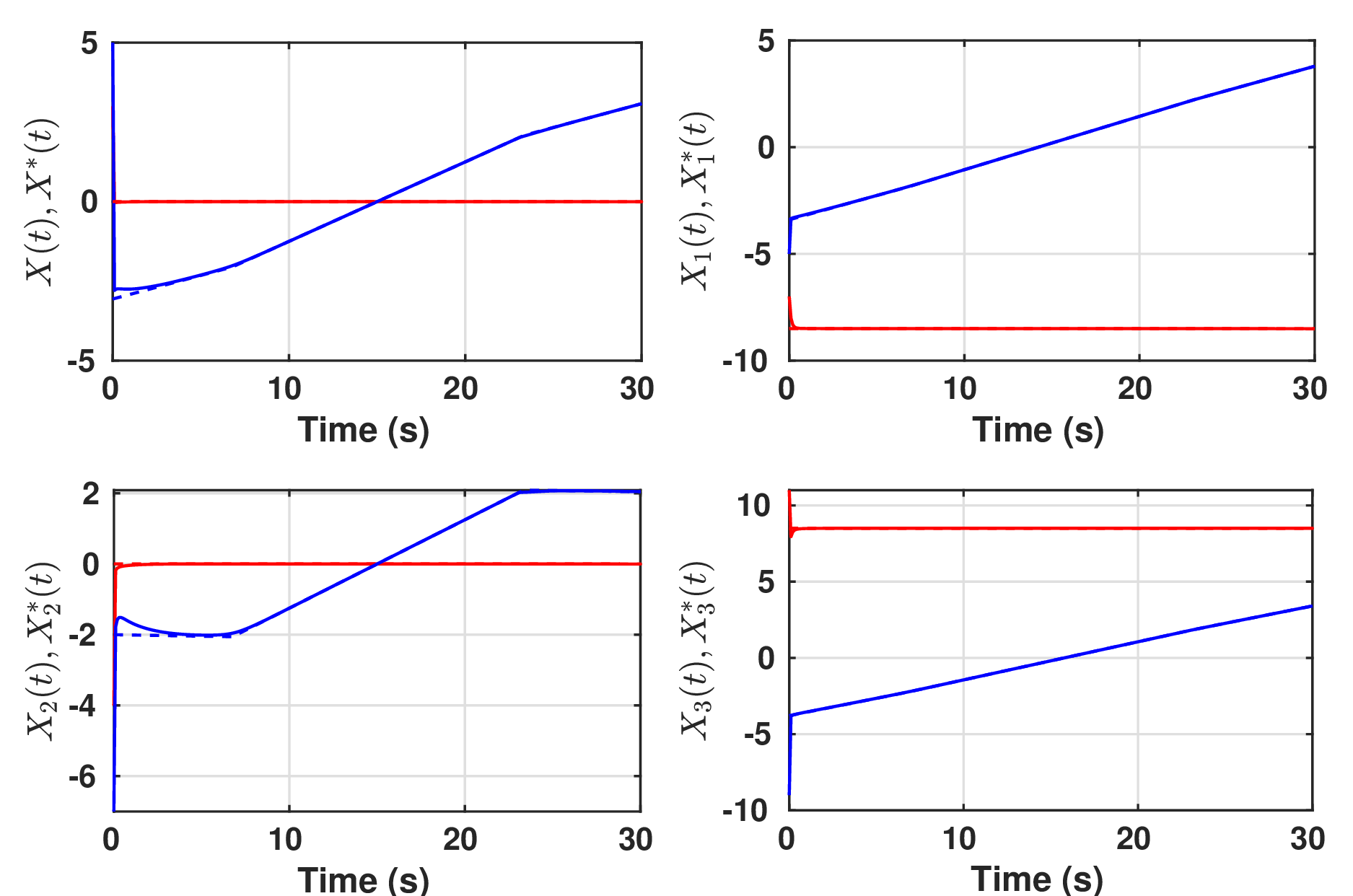}
        \caption{Example 2: Fixed-time convergence of the trajectories of the system \eqref{FxtsDyn} (solid line) to the optimizer trajectories (dashed line) obtained with \texttt{CVX}. The red and blue line indicates the $x$ and $y$ components of the trajectory.}
        \label{fig2_Traj_Fxts}
    \end{figure}
    \begin{figure}
        \centering
        \subfloat[Asymptotically convergent system\label{L2a}]{\includegraphics[width=0.48\linewidth]{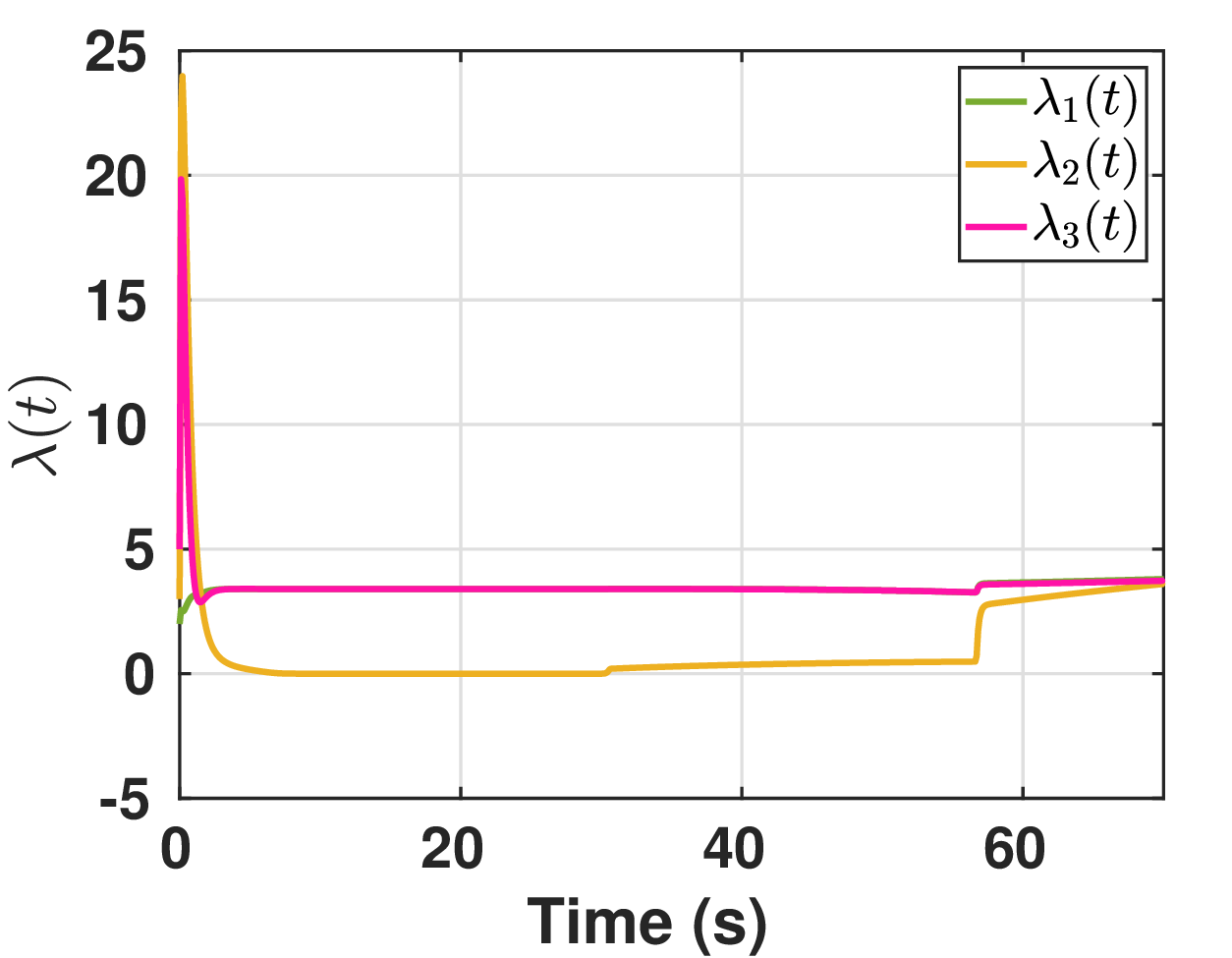}}
        \hfill
        \subfloat[Fixed-time convergent system\label{L2b}]{\includegraphics[width=0.48\linewidth]{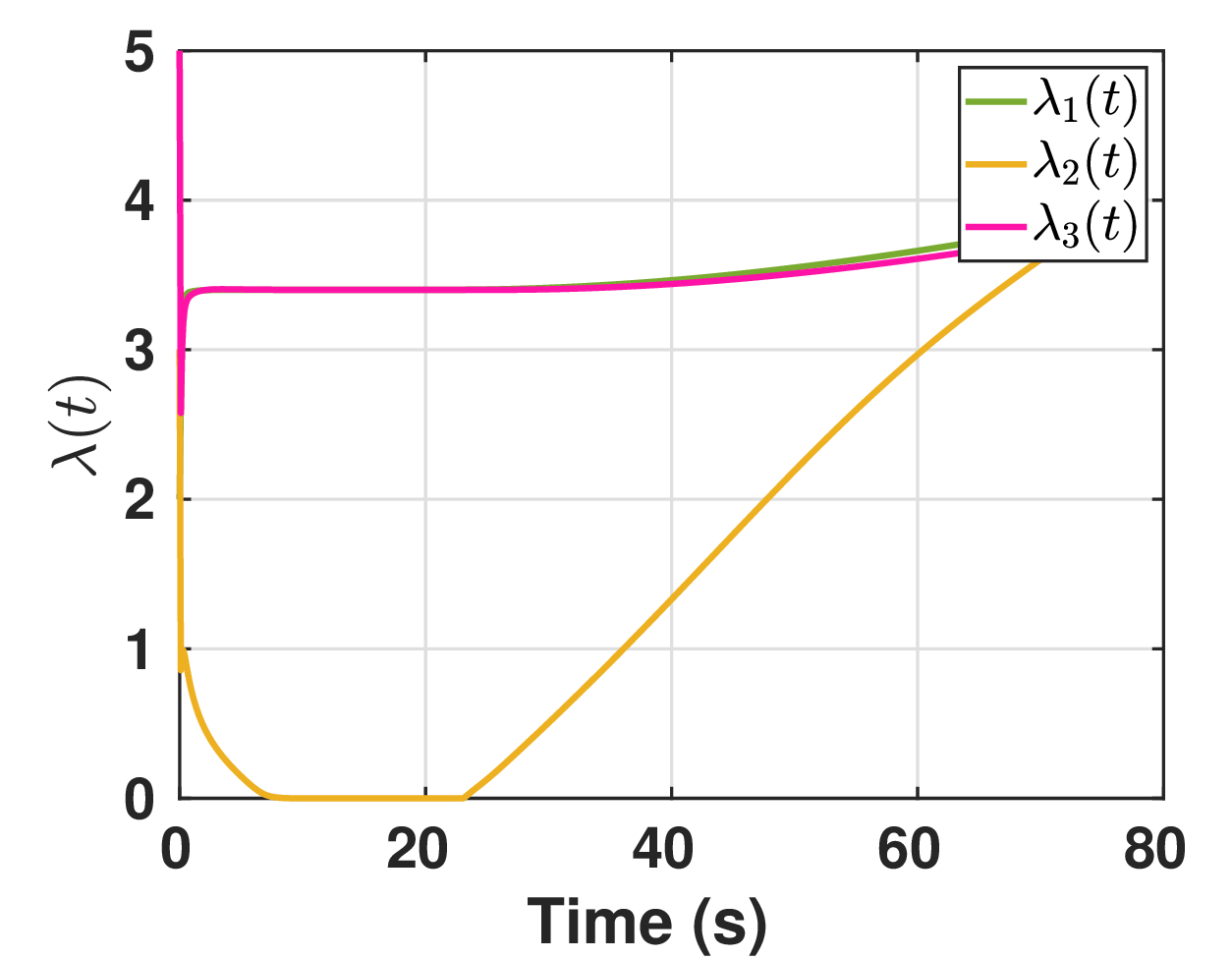}}
        \caption{Evolution of the dual optimizer trajectories associated with Example 2}
        \label{fig2_Lam_Fxts}
    \end{figure}
        Since the inequality constraint representing the convex set $\Omega_2$ is inactive in its interior, the corresponding optimal Lagrange multiplier becomes zero by the complementary slackness condition. As time advances,  the three disks again align triangularly, bringing the optimal point $X_2\st$ to the boundary of $\Omega_2$ and $\lambda_2\st$ to a positive value.  This transition of the dual optimizer trajectory between zero and a positive value is depicted in Fig. \ref{L2a} and Fig. \ref{L2b} for the asymptotic and fixed-time scenarios, respectively. 
        The parameters of the fixed-time system is chosen as, $c_1=1$, $c_2=1$, $\gamma_1=0.1$, $\gamma_2=-2$ and for slack variables $\rho=0.1$ and $k=0.001$ for $i=1,2,3$. We use the same slack variable from Example \ref{Ex1} for the asymptotically convergent system. Figure \ref{fig2_Traj_Exp} and Fig. \ref{fig2_Traj_Fxts} respectively shows the convergence of the trajectories of the asymptotic and fixed-time convergent systems to the optimizer trajectory obtained from the batch process. It is clear from the Fig. \ref{fig2_Traj_Exp} that there exists a transient phase during the switching of the Lagrange multiplier from zero to a positive value for the asymptotically convergent system. However this transient phase has been bypassed in the fixed-time dynamical system and its trajectories converge to the optimizer trajectory ahead of the upper bound of $10.6026$ seconds on the settling time function value as shown in Fig.\ref{fig2_Traj_Fxts}. 
        \end{example}
    \begin{table}[h!]
        \caption{Comparison of computation time among different approaches}
        \label{tab2}
        \centering
        \begin{tabular}{|l|ccc|}
        \hline
        \multirow{2}{*}{} & \multicolumn{3}{c|}{Computation time in seconds}\\    \cline{2-4} 
        & \multicolumn{1}{c|}{
            \begin{tabular}
                [c]{@{}c@{}}Batch Process\\ 
                using \texttt{CVX}
            \end{tabular}} 
        & \multicolumn{1}{c|}{
            \begin{tabular}
                [c]{@{}c@{}}Asymptotic\\ 
                Convergence
            \end{tabular}} 
        & \begin{tabular}
            [c]{@{}c@{}}Fixed-time\\ 
            Convergence
            \end{tabular} \\ 
        \hline
        Example 1 
            & \multicolumn{1}{c|}{332.9812}    
            & \multicolumn{1}{c|}{12.7220}                                            
            & 5.2017 \\ 
        \hline
        Example 2         
            & \multicolumn{1}{c|}{364.2198}                                         
            & \multicolumn{1}{c|}{42.3414}                                           & 0.5398   \\ 
        \hline
        \end{tabular}
`    \end{table}
    The computation time needed for the two examples with different approaches is tabulated in Table \ref{tab2}. It is validated from the Table \ref{tab2} that the batch process needs a larger computation time than the two dynamical systems approaches presented in this paper.  It substantiates that the proposed approaches performs better than the conventional batch approach to solve inequality constrained TV convex optimization problems.  Furthermore, among the two dynamical systems presented in this paper, the fixed-time dynamical system requires the least and the best computation time than the asymptotic dynamical system approach.
\section{Discussions and Future work}
    In this work we proposed projected dynamical systems to track the optimizer trajectory of an inequality constrained TV convex optimization problem with a strongly convex and twice continuously differentiable objective function and guaranteed the local asymptotic and fixed-time convergence of the proposed systems.
    A numerical example is employed to compare the performance of the proposed primal-dual prediction-correction dynamical system approaches with the existing prediction-correction approach. The results demonstrate that the proposed projected dynamical system outperforms the existing approach in terms of tracking capability. The proposed dynamical system approaches are later implemented to solve the approximated TV eFTP that minimizes the sum-of-squared distances to a finite number of nonempty, closed and TV convex sets. The approach presented in this work encourages a future scope of developing a nonsmooth projected dynamical system to solve unconstrained and constrained TV optimization problem by relaxing the twice differentiability and strong convexity of the objective function of TV convex optimization problems.
%%%%%%%%%%%%%%%%%%%%%%%%%%%%%%%%%%%%%%%%%%%%%%%%%%%%%%%%%%%%%%%%%%%%%%%%%
\appendices
\section{Proof of Lemma \ref{Lagragian_strongconvex}}\label{Proof_Lagrangian_StrongConvexity}
    \begin{proof}
        Consider  $x_1(t),x_2(t)\in\R^n$ and the Lagrange multiplier $\lambda=(\lambda_1(t),\lambda_2(t),\ldots,\lambda_m(t))\in\R^m_{\geq 0}$ at the time instant $t$. Then
            \begin{align*}
                L(x_2,&\lambda,t)-L(x_1,\lambda,t) \\
                &= f(x_2,t)-f(x_1,t)+\sum_{i=1}^m\lambda_i\left(g_i(x_2,t)-g_i(x_1,t)\right)\\
                &\geq \nabla_x f(x_1,t)^\top(x_2-x_1)+\frac{\mu}{2}\vectornorm{x_2-x_1}^2\\
                &+\sum_{i=1}^m\lambda_i\left(\nabla g_i(x_1,t)^\top(x_2-x_1)\right)\\
                &= \left(\nabla f(x_1,t)+\sum_{i=1}^m \lambda_i\nabla g_i(x_1,t)\right)^\top(x_2-x_1)\\
                &+\frac{\mu}{2}\vectornorm{x_2-x_1}^2\\
                &= \nabla_x L(x_1,\lambda,t)^\top(x_2-x_1)+\frac{\mu}{2}\vectornorm{x_2-x_1}^2.
            \end{align*}
        The inequality in the proof follows from the strong convexity of the objective function $f(x,t)$ in $x$ for all $t\geq0$.
    \end{proof}
    
%%%%%%%%%%%%%%%%%%%%%%%%%%%%%%%%%%%%%%%%%%%%%%%%%%%%%%%%%%%%%%%%%%%%%%%%%
\section{Proof of Lemma \ref{Lemma_Invertibility}}\label{Proof_Lemma_Invertibility}
    \begin{proof}
        The invertibility of the time-invariant $J$ matrix can be found in  \cite{bertsekas2014constrained}.
        We provide an alternate proof for  the invertibility of  $J(x\st,\lambda\st,t)$  
 by showing the invertability of Schur complement of $\xx L$. Consider the following three distinct scenarios:
        \begin{enumerate}[(i)]
            \item $g_i(x\st,t)<0$ for $i=1,2,\ldots,m$ at time $t$.\\
                Then by complementary slackness condition \eqref{KKT2},  $\lambda_i(t)=0$ for all $i=1,2,\ldots,m$ and thus $M=G_d(x\st,t)\prec0$.
            \item $\lambda_i(x\st,t)>0$ for $i=1,2,\ldots,m$.\\
                Then $G_d=0_{m\times m}$ by \eqref{KKT2} and the Schur matrix $M=-\lambda\st\circ\x G^\top\xx L^{-1}\x G\prec0$ by Assumption \ref{Assumption_linearindependence}.
            \item $\lambda_i\st(t)>0$ for $i=1,2,\ldots, \vert I(x,t)\vert$ and $\lambda_i\st(t)=0$ for $i=\vert I(x,t)\vert +1,\ldots,m$ with $\\vert I(x,t)\vert <m$. \\
                Then $g_i(x\st,t)<0$ for $i=\vert I(x,t)\vert+1,\ldots,m$ by Assumption \ref{Assumption_StrictComplementary} and it further guarantees the non singularity of the Schur matrix $M$ under  Assumption \ref{Assumption_linearindependence}. It can be verified from the structure of $M(x,\lambda,t)$ matrix given in \eqref{MMatrix}.
        \begin{figure*}[b]
            \begin{equation}\label{MMatrix}
                M=
                \begin{bmatrix}
                    g_1-\lambda_1(t)\x g_1^\top\xx L^{-1}\x g_1 &-\lambda_1(t)\x g_1^\top\xx L^{-1}\x g_2&\hdots&-\lambda_1(t)\x g_1^\top\xx L^{-1}\x g_m\\
                    -\lambda_2(t)\x g_2^\top\xx L^{-1}\x g_1&g_2-\lambda_2(t)\x g_2^\top\xx L^{-1}\x g_2&\hdots&-\lambda_2(t)\x g_2^\top\xx L^{-1}\x g_m\\
                    \vdots&\vdots&\ddots&\vdots\\
                    -\lambda_m(t)\x g_m^\top\xx L^{-1}\x g_1&-\lambda_m(t)\x g_m^\top\xx L^{-1}\x g_2&\hdots&g_m-\lambda_m(t)\x g_m^\top\xx L^{-1}\x g_m
                \end{bmatrix}
            \end{equation}
        \end{figure*}
    \end{enumerate}
    Thus the existence of the inverse of the block matrix $J(x\st,\lambda\st,t)$ is guaranteed for all $t\geq0$ and is,
        \begin{equation}
            J^{-1}(x\st,\lambda\st,t)=
                \begin{bmatrix}
                    A & B\\ C & D
                \end{bmatrix}
        \end{equation}
        \begin{align*}
            &\textrm{where }A = \xx L^{-1}\left(I_n+\x GM^{-1}\left(\lambda\st\circ\x G^\top\right)\xx L^{-1}\right)\\
            &B = -\xx L^{-1}\x GM^{-1},~
            C = -M^{-1}\left(\lambda\st\circ\x G^\top\right)\xx L^{-1}\\
            &\textrm{ and }D = M^{-1}.
        \end{align*}
    \end{proof}
%%%%%%%%%%%%%%%%%%%%%%%%%%%%%%%%%%%%%%%%%%%%%%%%%%%%%%%%%%%%%%%%%%%%%%%%%
\section{Proof of Lemma \ref{Lemma_StrongConvexity}}\label{Proof_Lemma_StrongConvexity}
    \begin{proof}
        The assumptions made throughout this paper hold for each time instant $t$ and hence the proof hold for all $t\geq 0$.
        We have to prove that $\x L(x,\lambda,t)=0$ if and only if $(x(t),\lambda(t))=(x\st(t),\lambda\st(t))$ for all $t\geq 0$. The KKT condition \eqref{KKT1} ensures that that if $(x(t),\lambda(t))=(x\st(t),\lambda\st(t))$, then $\x L(x,\lambda,t)=0$. To prove the converse, let us assume that there exists a feasible, non-optimal primal-dual pair $(\tilde{x},\tilde{\lambda})$ at time $t$ such that $\x L(\tilde{x},\tilde{\lambda},t)=0$. Since the Lagrangian function is strongly convex $\x L(\tilde{x},\tilde{\lambda},t)=0$ implies,
        \begin{equation}\label{Eq1}
            \inf_{x(t)} L(x,\tilde{\lambda},t)=L(\tilde{x},\tilde{\lambda},t).
        \end{equation}
        Now consider the dual optimization problem 
        \begin{equation}\label{Eq2}
            \max_{\lambda(t)\geq0} \enspace Q(\lambda,t)
        \end{equation}
        where $Q(\lambda,t)$ is the Lagrange dual function defined as
        \begin{equation}\label{Eq3}
            Q(\lambda,t)=\inf_{x(t)} L(x,\lambda,t).
        \end{equation}
        Since $\tilde{\lambda}(t)$ is not the dual optimal trajectory,
        \begin{align}\label{Eq4}
            Q(\tilde{\lambda},t)  \leq Q(\lambda\st,t)
        \end{align}
        where
        \begin{equation}\label{Eq5}
            Q(\tilde{\lambda},t)=\inf_{x(t)} L(x,\tilde{\lambda},t)=L(\tilde{x},\tilde{\lambda},t)
        \end{equation}
        and 
        \begin{equation}\label{Eq6}
            Q(\lambda\st,t)=\inf_{x(t)} L(x,\lambda\st,t)=L(x\st,\lambda\st,t).
        \end{equation}
        Equation \eqref{Eq5} follows from \eqref{Eq1}, and \eqref{Eq6} from the saddle point property $L(x\st,\lambda,t)\leq L(x\st,\lambda\st,t)< L(x,\lambda\st,t)$. On substituting \eqref{Eq5} and \eqref{Eq6} into \eqref{Eq4}, we get
        \begin{equation}\label{32}
            L(\tilde{x},\tilde{\lambda},t)\leq L(x\st,\lambda\st,t).
        \end{equation}
        By Assumption \ref{Assumption_StrongConvexity}, the primal optimal point $x\st$ is unique for all $t\geq0$, and under the Assumption \ref{Assumption_linearindependence}, the KKT condition \eqref{KKT1} guarantees the uniqueness of the dual optimal point $\lambda\st$ at each time $t\geq0$. Since $\x L(x\st,\lambda\st,t)=0$,  for \eqref{32} to hold true, the feasible point $(\tilde{x},\tilde{\lambda})$ has to be same as the primal-dual optimal pair $(x\st,\lambda\st)$ for all $t\geq0$. Since it is true for all $t\geq 0$, the feasible trajectory $(\tilde{x}(t),\tilde{\lambda}(t))$ coincides with the optimizer trajectory $(x\st(t),\lambda\st(t))$. That is, the gradient of the Lagrangian function vanishes only along the unique primal-dual optimizer trajectory $(x\st(t),\lambda\st(t))$ of the problem \eqref{Ineq}. 
    \end{proof}
%%%%%%%%%%%%%%%%%%%%%%%%%%%%%%%%%%%%%%%%%%%%%%%%%%%%%%%%%%%%%%%%%%%%%%%%%
\section{Proof of Lemma \ref{lemmaAdditionalTerm}}\label{lemmaAdditionalTerm_Proof}
    \begin{proof}
        The proof of necessity follows from the KKT optimality condition \eqref{KKT1}.
        To prove the sufficiency, consider $\lambda(t)=0$ at some time instant $\Bar{t}\geq 0$, then \eqref{Lambda_Dynamics} reduces to $\X_{\lambda}=\x G^\top(x,t)\x L(x,\lambda,t)$ for all $t\geq\Bar{t}$. That is, $\lambda(t)$ escapes from zero unless $\x G^\top(x,t)\x L(x,\lambda,t)=0$ for all $t\geq \Bar{t}$. Then by Assumption \ref{inactiveGi},
        $\x G^\top(x,t)\x L(x,\lambda,t)\equiv0$ can hold only if $\x L(x,\lambda,t)=0$ for all $t\geq \Bar{t}$. It implies by Lemma \ref{Lemma_StrongConvexity} that the augmented term vanishes only along the primal-dual optimizer trajectory $\left(x\st(t),\lambda\st(t)\right)$.
    \end{proof}
%%%%%%%%%%%%%%%%%%%%%%%%%%%%%%%%%%%%%%%%%%%%%%%%%%%%%%%%%%%%%%%%%%%%%%%%%

\section{Proof of Lemma \ref{ProjectedSystem}}\label{Lemma_ProjectedSystem}
    \begin{proof}
        The proof of Lemma \ref{ProjectedSystem} is along the lines of \cite[Lemma 4.2]{Cherukuri:asymp_conv}. Consider the vector field $\X:\R^n\times\R^m\times\R_{\geq0}\rightarrow\R^n\times\R^m$.
        There exists a $\tilde{\delta}$ such that for all $\delta\in[0,\tilde{\delta})$ and for $i=1,2,\ldots,m$, 
        \begin{equation}
            \begin{aligned}
                \textrm{proj}&_{\K}\left((x(t),\lambda(t))+\delta \X(x(t),\lambda(t)))\right) \\
                &=\begin{cases}
                    (x(t),\lambda(t))+\delta\X(x,\lambda,t),  \quad\;
                    \textrm{ if } (x(t),\lambda_i(t))\notin\Xi_i(t)\\
                    (x(t),\lambda(t))+\delta
                    \begin{pmatrix}
                       \X_x\\\left(\X_{\lambda}\right)_1\\\vdots\\\left(0\right)_i\\\vdots\\\left(\X_{\lambda}\right)_m
                    \end{pmatrix},
                    \textrm{ if } (x(t),\lambda_i(t))\in\Xi_i(t)
                \end{cases}
            \end{aligned}
        \end{equation}
        where $\K=\R^n\times\R^m_{\geq0}$. Then the associated projected dynamical system corresponding to $\X(x,\lambda,t)$ at $(x(t),\lambda(t))$ is
        \begin{align}
            \Pi_{\K}\big((x(t),\lambda(t))&,\X(x,\lambda,t)\big) \nonumber\\
            &=\begin{cases}
                \begin{pmatrix}
                    \X_x\\\left(\X_{\lambda}\right)_1\\\vdots\\\left(0\right)_i\\\vdots\\\left(\X_{\lambda}\right)_m
                    \end{pmatrix},
                    \hspace{-0.5cm}&\textrm{ if } (x(t),\lambda_i(t))\in\Xi_i(t)
                \\
                \X(x,\lambda,t), &\textrm{otherwise}.
            \end{cases}
        \end{align}
        This implies that $\X_{pd}=\Pi_{\K}\left((x,\lambda,t), \X(x,\lambda,t)\right)$ for all $(x,\lambda,t)\in\K\times\R_{\geq0}$.
    \end{proof}
%%%%%%%%%%%%%%%%%%%%%%%%%%%%%%%%%%%%%%%%%%%%%%%%%%%%%%%%%%%%%%%%%%
\section{Proof of Lemma \ref{Lemma_ContinuityofFXTS}}\label{Proof_Lemma_Continuity}
    \begin{proof}
        The proof is along the lines of the proof of \cite[Lemma 6]{Fxts:Garg}. 
        Let $z\st(t)=(x\st(t),\lambda\st(t))$. 
        To show the continuity of the right hand side of \eqref{FXtsVectorField} at $z(t)=z\st(t)$ for all $t\geq 0$, substitute $z(t)=z\st(t)$ in \eqref{FXtsVectorField}. Then we get,
        \begin{align*}
            \dot{z}(t)|_{z\st(t)}=& -\tilde{J}^{-1}(z\st,t)\left(\dfrac{c_1\x L(z\st,t)}{\vectornorm{\x L(z\st,t)}^{\gamma_1}}+\dot{z}\st(t)\right),
        \end{align*}
        $\tilde{J}^{-1}\rightarrow J^{-1}$ as $z(t)\rightarrow z\st(t)$.
        Now we need to show that $\lim_{z(t)\rightarrow z\st(t)} \dfrac{\x L(z,t)}{\vectornorm{\x L(z,t)}^{\gamma_1}}=0$. 
        A trajectory $z(t)$ of \eqref{FxtsDyn} is said to converge to the trajectory $z\st(t)$ (denoted by $z(t)\rightarrow z\st(t)$) in the norm sense, if for a given $\epsilon>0$, there exists $\Bar{T}>0$ such that for all $t\geq \Bar{T}\geq 0$, $\vectornorm{z(t)-z\st(t)}<\epsilon$. Since the norm function is continuous, under the Assumption \ref{Assumption_boundedness},
        \begin{align*}
            &\left\|\lim_{\vectornorm{z(t)-z\st(t)}\rightarrow0}\dfrac{\x L(z,t)}{\vectornorm{\x L(z,t)}^{\gamma_1}}\right\|\\
            &\;\;=\lim_{\vectornorm{z(t)-z\st(t)}\rightarrow0}\left\|\dfrac{\x L(z,t)}{\vectornorm{\x L(z,t)}^{\gamma_1}}\right\|\\
            &\;\;= \lim_{\vectornorm{z(t)-z\st(t)}\rightarrow 0}\left\|\x L(z,t)-\x L(z\st,t)\right\|^{1-\gamma_1}\\
            &\;\;\leq\mathcal{L}_p^{1-\gamma_1}\lim_{\vectornorm{z(t)-z\st(t)}\rightarrow0}\vectornorm{z(t)-z\st(t)}^{1-\gamma_1}=0.
        \end{align*}
        This implies that the right hand side of \eqref{FXtsVectorField} is continuous at $z(t)=z\st(t)$ for all $t\geq 0$ and hence it is continuous for all $x(t)\in\R^n$.
    \end{proof}

\section*{Acknowledgment}
The work of Arun D. Mahindrakar was supported by SERB under the matrix project MTR\slash2020\slash000474 while the work of Umesh Vaidya was supported by NSF ECCS grant 2031573.

\bibliographystyle{IEEEtran}
\bibliography{IEEEabrv,ref}

\begin{IEEEbiography}[{\includegraphics[width=1in,height=1.25in,clip,keepaspectratio]{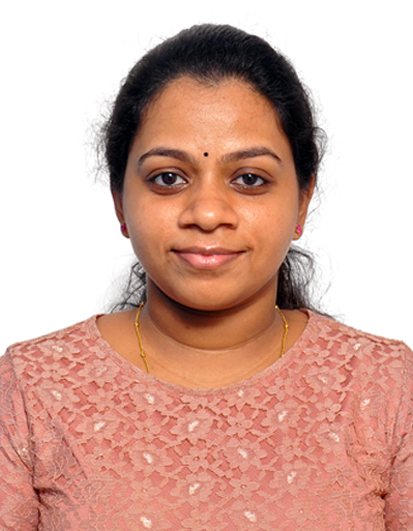}}]{Rejitha Raveendran} received the B.Tech degree in electronics and communication engineering, from University of Kerala, India in 2014, M.Tech degree in control systems from Indian Institute of Space Science and Technology, Thiruvananthapuram, India in 2017 and PhD degree from Indian Institute of Technology madras, India in 2022. She is currently a postdoctoral researcher at the Research and Innovation group of Tata Consultancy Services since November 2022.

Her research are include convex optimization, control systems, stability of nonlinear dynamical systems and electric mobility.
\end{IEEEbiography}

\begin{IEEEbiography}[{\includegraphics[width=1in,height=1.25in,clip,keepaspectratio]{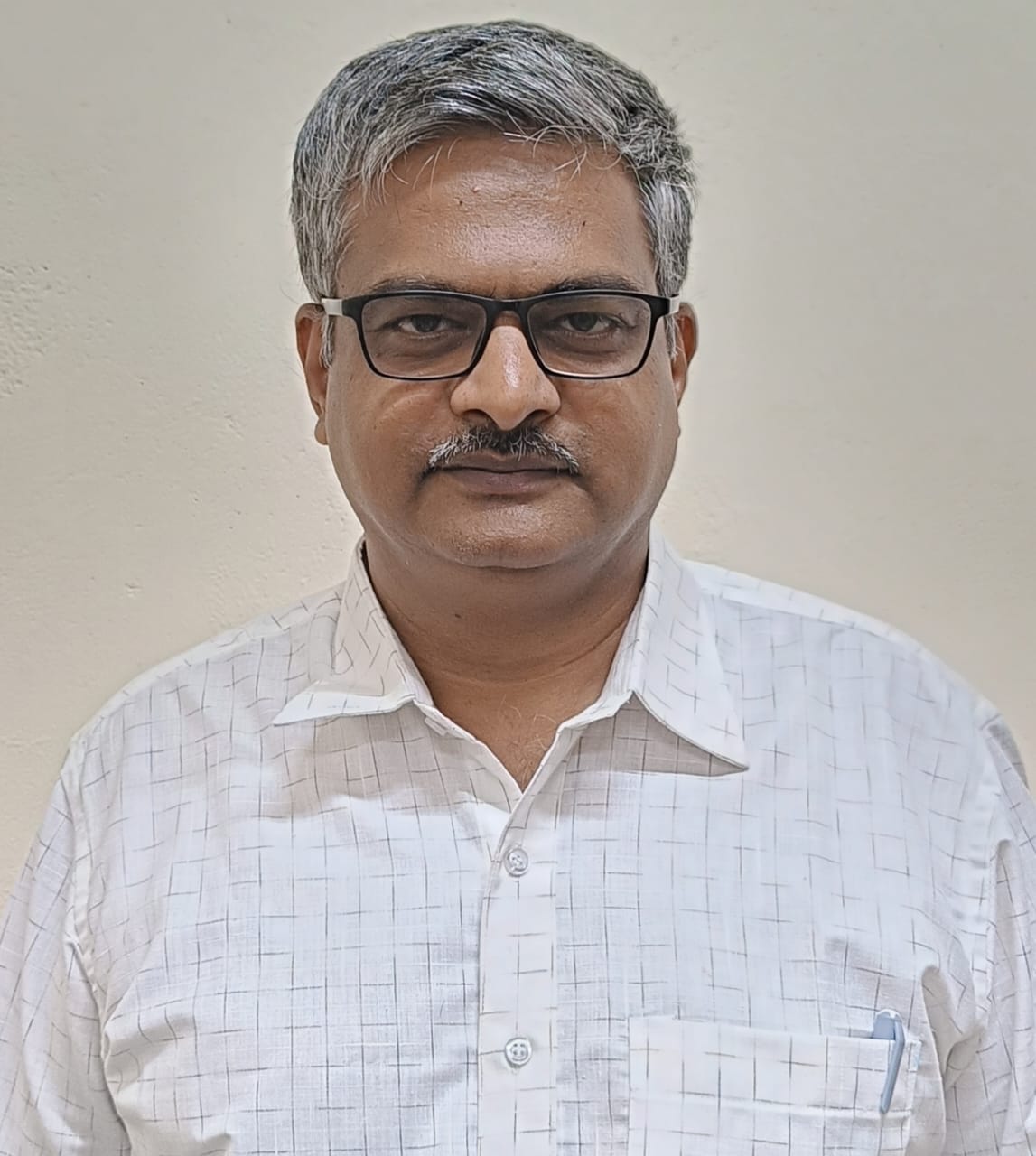}}]{Arun D Mahindrakar} received the Bachelor of Engineering degree in electrical and electronics from Karnatak University, Dharwad, India, in 1994, the Master’s of Engineering degree in control systems from Veermata Jijabai Technological Institute, Mumbai, India, in 1997, and the Ph.D. degree in systems and control from Indian Institute of Technology Bombay, India, in 2004. He was a Postdoctoral Fellow with the Laboratory of Signals and Systems, Supelec, Paris, France, from 2004 to 2005. 

He is currently a Professor with the Department of Electrical Engineering, Indian Institute of Technology Madras, Chennai, India. His research interests include nonlinear stability, geometric control, formation control of multiagent systems and convex optimization.
\end{IEEEbiography}

\begin{IEEEbiography}[{\includegraphics[width=1in,height=1.25in,clip,keepaspectratio]{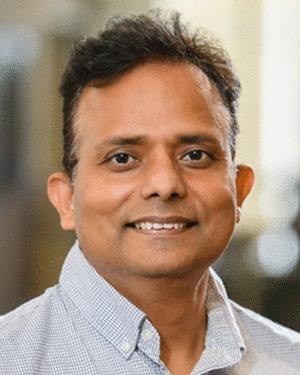}}]{Umesh Vaidya}  received the Ph.D. degree in mechanical engineering from the University of California at Santa Barbara, Santa Barbara, CA, in 2004. He was a Research Engineer with the United Technologies Research Center (UTRC), East Hartford, CT, USA. He is currently a Professor with the Department of Mechanical Engineering, Clemson University, S.C., USA. Before joining Clemson University in 2019, and since 2006, he was a Faculty with the Department of Electrical and Computer Engineering, Iowa State University.

His current research interests include dynamical systems and control theory. He was the recipient of 2012 National Science Foundation CAREER award.

\end{IEEEbiography}
\end{document}